




\documentclass{article}
\usepackage{amsfonts}
\usepackage{amsmath}
\usepackage{amssymb}
\usepackage{amsthm}
\usepackage[margin=1.3in]{geometry}
\usepackage{hyperref}
\usepackage{placeins}

\newtheorem{theorem}{Theorem}[section]
\theoremstyle{definition}
\newtheorem{example}[theorem]{Example}
\newtheorem{remark}[theorem]{Remark}

\makeatletter

\renewcommand*{\@fnsymbol}[1]{\@arabic{#1}}

\makeatother

\newtheorem{thm}{Theorem}

\newcommand{\red}{\text{red}}

\newcommand{\SG}{\mathcal{S}}

\newcommand{\cheader}[1]{\multicolumn{1}{|c|}{#1}}


\newcommand\gd{\delta}

\newcommand\gs{\sigma}

\newcommand\gS{\Sigma}

\newcommand\gt{\tau}

\newcommand\set[1]{\ensuremath{\{#1\}}}

\newcommand\bigpar[1]{\bigl(#1\bigr)}

\newcommand\Bigpar[1]{\Bigl(#1\Bigr)}

\newcommand\E{\operatorname{\mathbb E{}}}

\newcommand\PP{\operatorname{\mathbb P{}}}

\newcommand\Var{\operatorname{Var}}

\newcommand\Cov{\operatorname{Cov}}

\newcommand{\tend}{\longrightarrow}

\newcommand\dto{\overset{\mathrm{d}}{\tend}}

\newcommand\innprod[1]{\langle#1\rangle}

\newcommand\intoi{\int_0^1}

\newcommand\oi{[0,1]}

\newcommand\dd{\,\mathrm{d}}

\newcommand\xn[1]{X_{#1,n}}

\newcommand\fall[1]{^{\underline{#1}}}

\newcommand{\refT}[1]{Theorem~\ref{#1}}

\newcommand\marginal[1]{\marginpar{\raggedright\parindent=0pt\tiny #1}}

\newcommand\REM[1]{{\raggedright\texttt{[#1]}\par\marginal{XXX}}}

\newcommand\ett[1]{\boldsymbol1\left[#1\right]} 

\DeclareMathOperator*{\sumx}{\sum\nolimits^{*}}

\begin{document}

\title{On the Asymptotic Statistics of the Number of Occurrences of Multiple Permutation Patterns}

\author{Svante Janson\thanks{Department of Mathematics, Uppsala University, Uppsala, Sweden. [svante.janson@math.uu.se]}, \; Brian Nakamura\thanks{CCICADA/DIMACS, Rutgers University-New Brunswick, Piscataway, NJ, USA. [brian.nakamura@rutgers.edu]}, \; and Doron Zeilberger\thanks{Mathematics Department, Rutgers University-New Brunswick, Piscataway, NJ, USA. [zeilberg@math.rutgers.edu]}}


\maketitle


\begin{abstract}

	We study statistical properties of the random variables $X_{\sigma}(\pi)$, the number of occurrences of the pattern $\sigma$ in the permutation $\pi$. We present two contrasting approaches to this problem: traditional probability theory and the ``less traditional'' computational approach. Through the perspective of the first one, we prove that for any pair of patterns $\sigma$ and $\tau$, the random variables $X_{\sigma}$ and $X_{\tau}$ are jointly asymptotically normal (when the permutation is chosen from $S_{n}$). From the other perspective, we develop algorithms that can show asymptotic normality and joint asymptotic normality (up to a point) and derive explicit formulas for quite a few moments and mixed moments empirically, yet rigorously. The computational approach can also be extended to the case where permutations are drawn from a set of pattern avoiders to produce many empirical moments and mixed moments. This data suggests that some random variables are not asymptotically normal in this setting.

\end{abstract}


\section{Introduction}

The primary area of interest in this article is the study of patterns in
permutations. We will denote the set of length $n$ permutations by
$\SG_{n}$. Let $a_{1} a_{2} \ldots a_{k}$ be a sequence of $k$ distinct real
numbers. The \emph{reduction} of this sequence, which is denoted by
$\red(a_{1} \ldots a_{k})$, is the length $k$ permutation $\pi_{1} \ldots
\pi_{k} \in \SG_{k}$ such that order-relations are preserved (i.e., $\pi_{i}
< \pi_{j}$ if and only if $a_{i} < a_{j}$ for every $i$ and $j$). Given a
(permutation) pattern $\tau \in \SG_{k}$, we say that a permutation $\pi =
\pi_{1} \ldots \pi_{n} \in \SG_{n}$ \emph{contains} the pattern $\tau$ if
there exists $1 \leq i_{1} < i_{2} < \ldots < i_{k} \leq n$ such that
$\red(\pi_{i_{1}} \pi_{i_{2}} \ldots \pi_{i_{k}}) = \tau$. Each such
subsequence in $\pi$ will be called an \emph{occurrence} of the pattern
$\tau$. If $\pi$ contains no such subsequence, it is said to \emph{avoid}
the pattern $\tau$. Additionally, we will denote the number of occurrences
of the pattern $\tau$ in permutation $\pi$ by $N_{\tau}(\pi)$ (e.g., $\pi$
avoids the pattern $\tau$ if and only if $N_{\tau}(\pi) = 0$).

For any pattern $\tau$ and integer $n \geq 0$, we define the set
\begin{equation}
	\SG_{n}(\tau) := \{ \pi \in \SG_{n} \; : \; \pi \text{ avoids the pattern } \tau \}
\end{equation}
and also define $s_{n}(\tau) := | \SG_{n}(\tau) |$. The patterns $\sigma$ and $\tau$ are said to be \emph{Wilf-equivalent} if $s_{n}(\sigma) = s_{n}(\tau)$ for all $n \geq 0$. We may also consider the more general set
\begin{equation}
	\SG_{n}(\tau,r) := \{ \pi \in \SG_{n} \; : \; \pi \text{ contains exactly } r \text{ occurrences of } \tau \}.
\end{equation}
We will analogously define $s_{n}(\tau,r) := | \SG_{n}(\tau,r) |$.

A classical problem in this area is to find an enumeration for these sets or at the least, to study properties of the generating function encoding the enumerating sequence (for example, is it rational/algebraic/holonomic?). However, it is not even known if these generating functions are always holonomic. In general, the enumeration problem gets very difficult very quickly. Patterns up to length $3$ are well-understood, but there are basic unresolved questions even for length $4$ patterns. For example, it is known that there are three Wilf-equivalence classes for length $4$ patterns: $1234$, $1324$, and $1342$. While the enumeration problems have been solved for $1234$ and $1342$, no exact enumeration (or even asymptotics) is known for $1324$.

A (probabilistic) variation of this problem was posed by Joshua Cooper \cite{Cooper}: Given two (permutation) patterns $\sigma$ and $\tau$, what is the expected number of copies of $\sigma$ in a permutation chosen uniformly at random from $\SG_{n}(\tau)$? We note that if the enumeration of $\SG_{n}(\tau)$ is known, this question is equivalent to counting the total number of occurrences of $\sigma$ in permutations from $\SG_{n}(\tau)$, or put more precisely, to compute
\begin{equation}
	T_{n}(\sigma, \tau) := \mathop{\sum} \limits_{\pi \in \SG_{n}(\tau)} {N_{\sigma}(\pi)}.
\end{equation}

B{\'o}na first addressed the question for $\tau = 132$ when $\sigma$ is either the increasing or decreasing permutation in \cite{Bona2}. He shows how to derive the generating functions for $T_{n}(1 2 \ldots k, 132)$ and $T_{n}(k \ldots 2 1, 132)$, the total number of occurrences of $1 2 \ldots k$ in $\SG_{n}(\tau)$ and occurrences of $k \ldots 2 1$ in $\SG_{n}(\tau)$, respectively. In \cite{Bona4}, B{\'o}na also shows that $T_{n}(213, 132) = T_{n}(231, 132) = T_{n}(312, 132)$ for all $n$ and provides an explicit formula for them. Rudolph \cite{Rudolph} also proves some conditions on when two patterns, say $p$ and $q$, occur equally frequently in $\SG_{n}(132)$ (i.e., $T_{n}(p,132) = T_{n}(q,132)$ for all $n$).

In \cite{Homberger}, Homberger answers the analogous question when $\tau = 123$ and shows that there are three non-trivial cases to consider: $T_{n}(132, 123)$, $T_{n}(231,123)$, and $T_{n}(321,123)$. He finds generating functions and explicit formulas for each one.

We will consider a more general problem. Given the pattern $\tau$, suppose that a permutation $\pi$ is chosen uniformly at random from $\SG_{n}(\tau)$. Given another pattern $\sigma$, we define the random variable $X_{\sigma}(\pi) := N_{\sigma}(\pi)$, the number of copies of $\sigma$ in $\pi$. Observe that $T_{n}(\sigma, \tau) = \mathbb{E}[X_{\sigma}]$, the expected value of $X_{\sigma}$ (i.e., the first moment of the random variable). The focus of this paper is to study higher moments for $X_{\sigma}$ as well as mixed moments between two such random variables that count different patterns. We will consider the case where the permutation $\pi$ is randomly chosen from $\SG_{n}$ as well as some cases where $\pi$ is chosen from $\SG_{n}(\tau)$ (for various patterns $\tau$).

In this paper, we approach the problem from two different angles. On one end, we will present (human-derived) results proving that the random variables are jointly asymptotically normal when the permutations are chosen at random from $\SG_{n}$. Unfortunately, the techniques do not naturally extend to the scenario when the permutations are chosen from $\SG_{n}(\tau)$. On the other end, we present a computational approach that can quickly and easily compute many empirical moments for the general case (permutations chosen from $\SG_{n}(\tau)$). In addition, for the case where permutations are chosen from $\SG_{n}$, the computational approach can rigorously produce closed-form formulas for quite a few moments and mixed moments of the random variables.

This paper is organized as follows. In Section~\ref{SECfunceqn}, we review and outline the functional equations enumeration approach developed in \cite{BN-GWILF2, NZ-GWILF}. In Section~\ref{SECmoments}, we derive both rigorous results and empirical values for higher order moments and mixed moments for various random variables $X_{\sigma}$. In Section~\ref{SECasymom}, we show that the random variables are jointly asymptotically normal when the permutations are randomly chosen from $\SG_{n}$. In Section~\ref{SECconcl}, we conclude with some final remarks and observations.


\section{Enumerating with functional equations}\label{SECfunceqn}

For various patterns $\tau$, functional equations were derived for enumerating permutations with $r$ occurrences of $\tau$ in \cite{BN-GWILF2, NZ-GWILF, NoonZeil}. These functional equations were then used to derive enumeration algorithms. We briefly review the relevant results here. The curious reader can see \cite{BN-GWILF2, NZ-GWILF, NoonZeil} for more details.


\subsection{Functional equations for single patterns}

Given a (fixed) pattern $\tau$ and non-negative integer $n$, we define the polynomial:
\begin{equation}
	f_{n}(\tau; \; t) := \mathop{\sum} \limits_{\pi \in \SG_{n}} {t^{N_{\tau}(\pi)}} .
\end{equation}
Recall that the coefficient of $t^{r}$ is exactly $s_{n}(\tau, r)$. For certain patterns $\tau$, a multi-variate polynomial $P_{n}(\tau; \; t; \; x_{1}, \ldots, x_{n})$ was defined so that $P_{n}(\tau; \; t; \; 1, \ldots, 1) = f_{n}(\tau; \; t)$ and that functional equations could be derived for the $P_{n}$ polynomial.

The pattern $\tau = 123$ was considered in \cite{NZ-GWILF, NoonZeil}, and the polynomial $P_{n}$ was defined to be:
\begin{equation}
	P_{n}(123; \; t; \; x_{1}, \ldots, x_{n}) := \mathop{\sum} \limits_{\pi \in \SG_{n}} { \left( t^{N_{123}(\pi)} \mathop{\prod} \limits_{i=1}^{n} {x_{i}^{| \{ (a,b) \; : \; \pi_{a}=i<\pi_{b}, \; 1 \leq a < b \leq n \} |}}  \right)} .
\end{equation}
It was shown that this $P_{n}$ satisfies the functional equation:
\begin{thm}
	For the pattern $\tau = 123$,
	\begin{equation}
		P_{n}(123; \; t; \; x_{1}, \ldots, x_{n}) = \mathop{\sum} \limits_{i=1}^{n} {x_{i}^{n-i} \cdot P_{n-1}(123; \; t; \; x_{1}, \ldots, x_{i-1}, t x_{i+1}, \ldots, t x_{n})}. \tag{FE123} \label{FE123}
	\end{equation}
\end{thm}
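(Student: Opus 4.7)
The plan is to prove the functional equation by conditioning on the value of the first entry of a permutation $\pi \in \SG_n$. Writing $\pi_1 = i$ and letting $\pi'$ denote the reduction of $\pi_2 \ldots \pi_n$ (a permutation in $\SG_{n-1}$), I would show that each weighted term $t^{N_{123}(\pi)} \prod_j x_j^{e_j(\pi)}$, where $e_j(\pi) := |\{(a,b) : \pi_a = j < \pi_b,\, a<b\}|$, equals $x_i^{n-i}$ times the corresponding weighted term for $\pi'$, with the variables shifted and multiplied by $t$ in exactly the pattern prescribed by the right-hand side of \eqref{FE123}. Summing over $i$ and then over all $\pi' \in \SG_{n-1}$ will yield the identity.

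The core bookkeeping involves two statistics. First, $N_{123}(\pi) = N_{123}(\pi') + C_i(\pi')$, where $C_i(\pi')$ counts the 123-patterns of $\pi$ that use $\pi_1 = i$, namely pairs $(a,b)$ with $2 \leq a < b \leq n$ and $i < \pi_a < \pi_b$. Grouping by the middle value $\pi_a = j > i$, I get the key identity $C_i(\pi') = \sum_{j>i} e_j(\pi)$. Second, for the exponents: since every value in $\{i+1,\dots,n\}$ lies to the right of $\pi_1 = i$, one has $e_i(\pi) = n-i$; for $j < i$ the exponent is unchanged under the reduction, giving $e_j(\pi) = e_j(\pi')$; and for $j > i$ the reduction shifts $j$ down by one while preserving order, giving $e_j(\pi) = e_{j-1}(\pi')$.

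Combining these, the contribution of all permutations with $\pi_1 = i$ to $P_n(123;t;x_1,\ldots,x_n)$ becomes
$$x_i^{n-i} \sum_{\pi' \in \SG_{n-1}} t^{N_{123}(\pi')}\, t^{\sum_{k\geq i} e_k(\pi')} \prod_{j<i} x_j^{e_j(\pi')} \prod_{k \geq i} x_{k+1}^{e_k(\pi')},$$
and after absorbing the extra $t$ factors into the shifted variables to form $(tx_{i+1})^{e_i(\pi')}, \ldots, (tx_n)^{e_{n-1}(\pi')}$, this is precisely $x_i^{n-i}\cdot P_{n-1}(123; t; x_1, \ldots, x_{i-1}, tx_{i+1}, \ldots, tx_n)$. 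Summing over $i = 1, \ldots, n$ finishes the proof.

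The only real obstacle is the bookkeeping in the second paragraph: one must verify that the index shift $j \mapsto j-1$ on exponents for $j>i$ matches up correctly with the reindexing $y_k = tx_{k+1}$ on the right-hand side, and that the factor $t^{\sum_{j>i} e_j(\pi)}$ arising from the additional 123-patterns through $\pi_1$ is exactly absorbed by attaching a $t$ to each of $x_{i+1}, \ldots, x_n$. Once both shifts are aligned, the functional equation follows directly from the definition of $P_{n-1}$.
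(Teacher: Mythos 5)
Your proof is correct. The paper itself gives no proof of this theorem (it is quoted from the cited works \cite{NZ-GWILF, NoonZeil}), but your argument --- conditioning on the first entry $\pi_1 = i$, checking $e_i(\pi) = n-i$, $e_j(\pi) = e_j(\pi')$ for $j<i$ and $e_j(\pi) = e_{j-1}(\pi')$ for $j>i$, and absorbing the new $123$-patterns that use $\pi_1$ into the factors $t x_{i+1}, \ldots, t x_n$ --- is precisely the standard derivation of this functional equation, and all the bookkeeping in your second paragraph checks out (the only nitpick being that $C_i$ is a statistic of $\pi$, or of the pair $(i,\pi')$, rather than of $\pi'$ alone).
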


\noindent Since $P_{1}(123; \; t; \; x_{1}) = 1$, the functional equation can be used to recursively compute our desired quantity $P_{n}(123; \; t; \; 1, \ldots, 1) = f_{n}(123; \; t)$.

Similarly, in \cite{BN-GWILF2}, the polynomial $P_{n}$ was defined for the pattern $\tau = 132$ so that it satisfied the functional equation:
\begin{thm}
	For the pattern $\tau = 132$,
	\begin{equation}
		P_{n}(132; \; t; \; x_{1}, \ldots, x_{n}) = \mathop{\sum} \limits_{i=1}^{n} {x_{1} x_{2} \ldots x_{i-1} \cdot P_{n-1}(132; \; t; \; x_{1}, \ldots, x_{i-1}, t x_{i+1}, \ldots, t x_{n})}. \tag{FE132} \label{FE132}
	\end{equation}
\end{thm}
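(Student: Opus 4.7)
The plan is to mirror the proof strategy used for (FE123), adapting it to the $132$ pattern. The first step is to recall (from [BN-GWILF2]) the explicit definition of $P_n(132;\, t;\, x_1, \ldots, x_n)$. Reading off the structure of the right-hand side of (FE132) strongly suggests
$$P_n(132;\, t;\, x_1, \ldots, x_n) = \sum_{\pi \in \SG_n} t^{N_{132}(\pi)} \prod_{v=1}^{n} x_v^{w_v(\pi)},$$
where $w_v(\pi)$ counts the entries that occur \emph{before} the position of value $v$ in $\pi$ and are \emph{larger} than $v$ (equivalently, the inversions of $\pi$ whose smaller element is $v$). The prefix $x_1 \cdots x_{i-1}$ appearing in (FE132) forces each smaller value $v < i$ to pick up exactly one extra unit of $w_v$-weight when $i$ is prepended to a length-$(n-1)$ permutation, and the above choice of $w_v$ does precisely this.

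With the definition in hand, the proof proceeds by partitioning $\SG_n$ according to $i := \pi_1$. For each $i \in \{1, \ldots, n\}$, the map $\pi \mapsto \tilde\pi := \red(\pi_2 \cdots \pi_n)$ is a bijection from $\{\pi \in \SG_n : \pi_1 = i\}$ onto $\SG_{n-1}$, and the four elementary identities to verify are: (i) $w_i(\pi) = 0$, since nothing precedes position $1$; (ii) $w_v(\pi) = w_v(\tilde\pi) + 1$ for $v < i$, the extra $+1$ coming from the leading entry $i > v$; (iii) $w_v(\pi) = w_{v-1}(\tilde\pi)$ for $v > i$, since the leading $i$ is smaller than $v$ and the reduction shifts value labels by one; and (iv) the decomposition $N_{132}(\pi) = N_{132}(\tilde\pi) + C_i(\pi)$, where $C_i(\pi) := |\{(b,c) : 2 \leq b < c \leq n,\ i < \pi_c < \pi_b\}|$ counts the new $132$-occurrences that use position $1$ as their ``$1$''.

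The algebraic pivot is the observation
$$C_i(\pi) = \sum_{v > i} w_v(\pi) = \sum_{v' = i}^{n-1} w_{v'}(\tilde\pi),$$
which holds because every qualifying $21$-subsequence of $\pi_{\ge 2}$ is counted exactly once on the middle expression (indexed by its \emph{smaller} element, which must exceed $i$). This identity is exactly what licenses absorbing $t^{C_i(\pi)}$ into the substitution $x_v \mapsto tx_v$ for $v \geq i+1$. Combining (i)--(iv) and re-indexing the product $\prod_{v > i} x_v^{w_v(\pi)} \cdot t^{C_i(\pi)}$ as $\prod_{v'=i}^{n-1} (tx_{v'+1})^{w_{v'}(\tilde\pi)}$, one obtains
$$\sum_{\pi \in \SG_n,\ \pi_1 = i} t^{N_{132}(\pi)} \prod_{v} x_v^{w_v(\pi)} = x_1 \cdots x_{i-1} \cdot P_{n-1}(132;\, t;\, x_1, \ldots, x_{i-1}, tx_{i+1}, \ldots, tx_n),$$
and summing over $i = 1, \ldots, n$ yields (FE132).

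The main obstacle is bookkeeping rather than mathematics: one has to line up two simultaneous relabelings --- the relabeling of values $\{i+1, \ldots, n\} \to \{i, \ldots, n-1\}$ under the reduction, and the $t$-shift $x_v \mapsto tx_v$ applied to the arguments of $P_{n-1}$ indexed from $i+1$ onward --- so that the cross-term $t^{C_i(\pi)}$ is absorbed \emph{exactly once} and no indices are off by one. Conceptually, however, this is a transparent first-entry decomposition, directly parallel to the proof of (FE123) in [NZ-GWILF, NoonZeil].
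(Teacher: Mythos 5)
Your proof is correct, and it is essentially the canonical one. Note that this paper contains no proof of (FE132) at all --- the theorem is quoted from \cite{BN-GWILF2}, where $P_{n}$ is defined (up to notation) exactly as you reconstructed it, with the exponent of $x_{v}$ counting the inversions of $\pi$ whose smaller entry is $v$ --- so there is nothing internal to compare against; your first-entry decomposition, with the key identity $C_{i}(\pi)=\sum_{v>i}w_{v}(\pi)$ licensing the absorption of the new $132$-occurrences into the substitution $x_{v}\mapsto t x_{v}$, is precisely the argument of the cited source and the direct analogue of the proof of (FE123).
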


\noindent Again $P_{1}(132; \; t; \; x_{1}) = 1$, so the functional equation can be used to recursively compute our desired quantity $P_{n}(132; \; t; \; 1, \ldots, 1) = f_{n}(132; \; t)$.

The same was also done for the pattern $\tau = 231$ in \cite{BN-GWILF2}. Although $f_{n}(231; \; t) = f_{n}(132; \; t)$, redeveloping the approach directly for the pattern $231$ allows us to consider the patterns $132$ and $231$ simultaneously. For $231$, the polynomial $P_{n}$ was defined so that it satisfies the functional equation:
\begin{thm}
	For the pattern $\tau = 231$,
	\begin{equation}
		P_{n}(231; \; t; \; x_{1}, \ldots, x_{n}) = \mathop{\sum} \limits_{i=1}^{n} {x_{1}^{0} x_{2}^{1} \ldots x_{i}^{i-1} \cdot P_{n-1}(231; \; t; \; x_{1}, \ldots, x_{i-1}, t x_{i} x_{i+1}, x_{i+2}, \ldots, x_{n})}. \tag{FE231} \label{FE231}
	\end{equation}
\end{thm}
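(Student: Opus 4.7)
The plan is to prove (FE231) by a first-entry peeling argument, directly analogous to the derivation of (FE132) in \cite{BN-GWILF2}. Every $\pi \in \SG_n$ is uniquely determined by the pair $(i,\sigma)$, where $i := \pi_1$ and $\sigma \in \SG_{n-1}$ is the reduction of $\pi_2\pi_3\cdots\pi_n$ (old values $v < i$ stay fixed while old values $v > i$ are decremented by $1$). The theorem reduces to showing that, for each fixed $i$, the contribution of $\pi$ to $P_n(231; t; x_1,\ldots,x_n)$ equals $x_1^0 x_2^1 \cdots x_i^{i-1}$ times the contribution of $\sigma$ to $P_{n-1}$ evaluated at $(x_1,\ldots,x_{i-1},\,tx_ix_{i+1},\,x_{i+2},\ldots,x_n)$; summing over $i = 1, \ldots, n$ then produces (FE231).

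The per-$\pi$ identity amounts to tracking how the two pieces of data encoded in $P_n$ behave under the peel: the count $N_{231}(\pi)$ and the per-value exponents on the $x_j$'s. A 231-occurrence in $\pi$ either lies entirely inside positions $2,\ldots,n$ (contributing $N_{231}(\sigma)$) or uses position 1 in the role of the ``$2$'', in which case it is a pair of later positions whose values first exceed $i$ and then drop below $i$. A short calculation identifies the number of such new occurrences with the exponent $e$ of $x_i$ in the $\sigma$-term of $P_{n-1}$; thus the factor $(tx_ix_{i+1})^e$ supplies the $t^e$ needed to promote $t^{N_{231}(\sigma)}$ to $t^{N_{231}(\pi)}$. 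For the $x_j$-exponents, I would split each one into contributions from pairs anchored at position $1$ and pairs interior to positions $2, \ldots, n$. The anchored pairs contribute $j-1$ whenever $j \le i$ (because $\pi_1 = i \ge j$ must be paired with one of the $j-1$ entries strictly below $j$, all of which lie to its right) and $0$ otherwise, yielding exactly the prefactor $x_1^0 x_2^1 \cdots x_i^{i-1}$. The interior pairs, transported through the value relabeling, match the $x_j$-exponent of $\sigma$ for $j < i$ and the $x_{j-1}$-exponent of $\sigma$ for $j > i$, which is precisely what the substitutions $x_k$ (for $k < i$) and $x_{k+1}$ (for $k > i$) on the right-hand side encode.

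The main subtlety is the dual role played by the merged middle argument $tx_ix_{i+1}$, covering both of the indices $j = i$ and $j = i+1$. A direct case check is needed to verify that the single factor $(tx_ix_{i+1})^e$ simultaneously carries (a) one power of $t$ per new 231 using position 1, (b) the interior contribution to the exponent of $x_i$, which combines with the $i-1$ from the prefactor to recover the full exponent of $x_i$ in $\pi$'s term, and (c) the entire exponent of $x_{i+1}$ in $\pi$'s term, which under the relabeling also equals $e$: for $b > 1$ the condition $\pi_b \ge i+1$ translates to $\sigma_{b'} \ge i$, while for $c > 1$ the condition $\pi_c < i+1$ forces $\pi_c < i$ (since the value $i$ already sits at position 1), i.e., $\sigma_{c'} < i$. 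Once these three claims are verified together, the remaining indices follow from the same relabeling, and summing over $i$ yields (FE231).
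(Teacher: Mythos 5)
Your proposal is correct, and all of its key claims check out: the first-entry peel, the identification of the new $231$-occurrences anchored at position $1$ with the exponent $e$ of the merged argument, the anchored/interior split of each $x_j$-exponent (anchored pairs giving the prefactor $x_1^0x_2^1\cdots x_i^{i-1}$, interior pairs giving the index shifts), and the threefold role of the factor $(t x_i x_{i+1})^e$. Note that this paper states the theorem without proof, citing \cite{BN-GWILF2}; your argument is exactly the standard derivation of these functional equations, and the only ingredient you leave implicit is the definition on which every step depends, namely that the exponent of $x_j$ in the term of $\pi$ counts the pairs $(b,c)$ with $b<c$ and $\pi_b\ge j>\pi_c$.
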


\noindent We again have that $P_{1}(231; \; t; \; x_{1}) = 1$, so the functional equation can be used to recursively compute our desired quantity $P_{n}(231; \; t; \; 1, \ldots, 1) = f_{n}(231; \; t)$.

The approach for the pattern $123$ was also extended to the pattern $\tau = 1234$ in \cite{NZ-GWILF}. The polynomial $P_{n}(1234; \; t; \; x_{1}, \ldots, x_{n}; \; y_{1}, \ldots, y_{n})$ was defined so that $P_{n}(1234; \; t; \; 1 \text{ [n times]}; \; 1 \text{ [n times]}) = f_{n}(1234; \; t)$ and in such a way that it satisfies the functional equation:
\begin{thm}
	For the pattern $\tau = 1234$,
	\begin{gather}
		P_{n}(1234; \; t; \; x_{1}, \ldots, x_{n}; \; y_{1}, \ldots, y_{n}) =\notag \\ 
		\mathop{\sum} \limits_{i=1}^{n} {y_{i}^{n-i} \cdot P_{n-1}(1234; \; t; \; x_{1}, \ldots, x_{i-1}, t x_{i+1}, \ldots, t x_{n}; \; y_{1}, \ldots, y_{i-1}, x_{i} y_{i+1}, \ldots, x_{i} y_{n})}. \tag{FE1234} \label{FE1234}
	\end{gather}
\end{thm}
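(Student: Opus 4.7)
The plan is to mirror the proof of (FE123), with the polynomial $P_n(1234; t; x_1, \ldots, x_n; y_1, \ldots, y_n)$ defined so that the auxiliary variables $x_i$ and $y_i$ track the refined pattern counts needed for the recursion to close. Specifically, I would set
$$P_n(1234; \; t; \; x_1, \ldots, x_n; \; y_1, \ldots, y_n) := \sum_{\pi \in \SG_n} t^{N_{1234}(\pi)} \prod_{i=1}^n x_i^{a_i(\pi)} y_i^{b_i(\pi)},$$
where $a_i(\pi)$ is the number of occurrences of $123$ in $\pi$ whose smallest value is $i$, and $b_i(\pi)$ is the number of occurrences of $12$ in $\pi$ whose smaller value is $i$. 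Specializing all $x_i = y_i = 1$ immediately gives $f_n(1234; t)$, so the identification with $f_n$ is built in.

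To derive the recursion, I would partition $\SG_n$ according to $i := \pi_1$, and use the bijection $\pi \mapsto \tilde{\pi} := \red(\pi_2 \ldots \pi_n)$ between $\{\pi \in \SG_n : \pi_1 = i\}$ and $\SG_{n-1}$. The proof then reduces to expressing the statistics $N_{1234}(\pi)$, $a_j(\pi)$, $b_j(\pi)$ in terms of those of $\tilde{\pi}$, together with explicit correction terms contributed by the leading entry $i$.

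The key identities are the following. First, since $\pi_1 = i$ has $n-i$ larger values to its right, $b_i(\pi) = n - i$, accounting for the factor $y_i^{n-i}$. Second, every $123$-occurrence in $\pi$ with smallest value $i$ must use position $1$ as its first entry, and therefore corresponds to a $12$-occurrence in $\pi_2 \ldots \pi_n$ with both values exceeding $i$; hence $a_i(\pi) = \sum_{j>i} b_j(\pi_2 \ldots \pi_n)$, which is exactly what the substitution $y_j \mapsto x_i y_j$ for $j > i$ distributes. Third, a $1234$-occurrence in $\pi$ either avoids position $1$ (contributing $N_{1234}(\tilde{\pi})$) or uses it as its first entry, corresponding to a $123$-occurrence in $\pi_2 \ldots \pi_n$ with all three values exceeding $i$; hence $N_{1234}(\pi) - N_{1234}(\tilde{\pi}) = \sum_{j>i} a_j(\pi_2 \ldots \pi_n)$, matched by $x_j \mapsto t x_j$ for $j > i$. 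Finally, for any $j \ne i$, the counts $a_j(\pi)$ and $b_j(\pi)$ coincide with the corresponding counts in $\pi_2 \ldots \pi_n$, because position $1$ lies to the left of the position of value $j$ and so cannot play the role of the second or later entry in a pattern based at $j$; these in turn equal the corresponding $\tilde{\pi}$-counts after the index shift by one for $j > i$.

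Combining these identities, grouping by $i$, and summing over $\tilde{\pi} \in \SG_{n-1}$ converts the right-hand side of the claimed functional equation precisely into the sum defining $P_n$. The main obstacle is not conceptual but notational: one must carefully track the index shifts between $\pi$-values and $\tilde{\pi}$-values, and the interaction between the two auxiliary statistics $a_i$ and $b_i$ through the twin substitutions $x_j \mapsto t x_j$ and $y_j \mapsto x_i y_j$. Each individual step, however, is a direct extension of the analogous accounting used in the proof of (FE123).
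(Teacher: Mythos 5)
Your proposal is correct and follows essentially the same route as the source this theorem is quoted from: your statistics ($x_i$ tracking $123$-occurrences of $\pi$ with smallest value $i$, $y_i$ tracking $12$-occurrences with smaller value $i$) are exactly the definition of $P_n(1234;\ldots)$ used in \cite{NZ-GWILF}, and your derivation by conditioning on $\pi_1=i$, peeling off the first entry, and tracking the chain $12 \to 123 \to 1234$ via the substitutions $y_j\mapsto x_iy_j$ and $x_j\mapsto tx_j$ for $j>i$ is the same argument given there (and the same as for \eqref{FE123}). All four of your accounting identities are valid, so there is no gap.
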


\noindent Since $P_{1}(1234; \; t; \; x_{1}; \; y_{1}) = 1$, the functional equation can be used to recursively compute our desired quantity $P_{n}(1234; \; t; \; 1 \text{ [n times]}; \; 1 \text{ [n times]}) = f_{n}(1234; \; t)$.


\subsection{Merging functional equations for multiple patterns}

It is also straight-forward to consider multiple patterns simultaneously if their corresponding functional equations are known, as shown in \cite{BN-GWILF2}. For example, suppose that we want to consider the two patterns $\sigma = 123$ and $\tau = 132$ simultaneously. We can extend the $f_{n}$ polynomial in the natural way to:
\begin{equation}
	f_{n}(\sigma, \tau; \; s, t) := \mathop{\sum} \limits_{\pi \in \SG_{n}} {s^{N_{\sigma}(\pi)} t^{N_{\tau}(\pi)} }.
\end{equation}

In \cite{BN-GWILF2}, the polynomial $P_{n}(123, 132; \; s, t; \; x_{1}, \ldots, x_{n}; \; y_{1}, \ldots, y_{n})$ was defined so that 
\begin{equation}
	P_{n}(123, 132; \; s, t; \; 1 \text{ [n times]}; \; 1 \text{ [n times]}) = f_{n}(123, 132; \; s, t).
\end{equation}
The following functional equation was then derived:
\begin{thm} \label{THM123n132}
	For the patterns $\sigma = 123$ and $\tau = 132$,
	\begin{gather*}
		P_{n}(123, 132; \; s, t; \; x_{1}, \ldots, x_{n}; \; y_{1}, \ldots, y_{n}) = \\ 
		\mathop{\sum} \limits_{i=1}^{n} {x_{i}^{n-i} \cdot y_{1} y_{2} \ldots y_{i-1} \cdot P_{n-1}(123, 132; \; s, t; \; x_{1}, \ldots, x_{i-1}, s x_{i+1}, \ldots, s x_{n}; \; y_{1}, \ldots, y_{i-1}, t y_{i+1}, \ldots, t y_{n})}. 
	\end{gather*}
\end{thm}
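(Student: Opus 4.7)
The plan is to mimic the single-pattern derivations of \eqref{FE123} and \eqref{FE132}, which both proceed by decomposing $\pi \in \SG_n$ according to the value $v := \pi_1$ in the first position. First, extend the polynomial definition in the natural way:
\[
 P_n(123, 132; \; s, t; \; x_1, \ldots, x_n; \; y_1, \ldots, y_n) := \sum_{\pi \in \SG_n} s^{N_{123}(\pi)} \, t^{N_{132}(\pi)} \prod_{i=1}^n x_i^{c_i(\pi)} \prod_{i=1}^n y_i^{d_i(\pi)},
\]
where $c_i(\pi)$ counts the values larger than $i$ that lie to the right of $i$'s position in $\pi$ (the $x$-statistic already used by \eqref{FE123}, with the role of $t$ now played by $s$), and $d_i(\pi)$ counts the values larger than $i$ that lie to the left of $i$'s position in $\pi$ (the $y$-statistic used by \eqref{FE132}). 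Specializing all $x_j$ and $y_j$ to $1$ recovers $f_n(123, 132; s, t)$ as required, so it suffices to verify the claimed recursion for $P_n$ directly.

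Next, I would fix $v \in \{1, \ldots, n\}$ and restrict the sum to those $\pi$ with $\pi_1 = v$; setting $\pi' := \red(\pi_2 \ldots \pi_n) \in \SG_{n-1}$ gives a bijection with $\SG_{n-1}$. The core of the proof is a bookkeeping step comparing the statistics of $\pi$ and $\pi'$. A $123$ pattern of $\pi$ using position $1$ corresponds to an ascent in $\pi'$ among the values originally greater than $v$, and a $132$ pattern of $\pi$ using position $1$ corresponds to an inversion in $\pi'$ among those same values, yielding
\[
 N_{123}(\pi) - N_{123}(\pi') = \sum_{k \geq v} c_k(\pi'), \qquad N_{132}(\pi) - N_{132}(\pi') = \sum_{k \geq v} d_k(\pi').
\]
The relabeling rules for the exponent statistics are $c_v(\pi) = n - v$ and $d_v(\pi) = 0$, together with $c_j(\pi) = c_j(\pi')$, $d_j(\pi) = d_j(\pi') + 1$ for $j < v$, and $c_j(\pi) = c_{j-1}(\pi')$, $d_j(\pi) = d_{j-1}(\pi')$ for $j > v$. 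Substituting these identities into the summand and factoring produces
\[
 s^{N_{123}(\pi)} \, t^{N_{132}(\pi)} \prod_i x_i^{c_i(\pi)} y_i^{d_i(\pi)} = x_v^{n-v} \cdot y_1 y_2 \cdots y_{v-1} \cdot \Bigl(\text{summand of } P_{n-1} \text{ at } \pi'\Bigr)\Big|_{x_j \mapsto s x_{j+1},\ y_j \mapsto t y_{j+1}\ (j \geq v)} ,
\]
so summing over $\pi' \in \SG_{n-1}$ at fixed $v$, and then over $v$ from $1$ to $n$, gives exactly the right-hand side of the claimed functional equation.

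The only real obstacle is organizational: one must verify that every $123$ and every $132$ pattern of $\pi$ is counted exactly once (either as a pattern of $\pi'$ or as one ``anchored'' at position $1$) and that the $c$- and $d$-statistics transform cleanly under the relabeling of $\pi_2, \ldots, \pi_n$. The decisive simplification is that the variables $(s, x_1, \ldots, x_n)$ and $(t, y_1, \ldots, y_n)$ track disjoint features of $\pi$ and never interact, so the bivariate merging is essentially the \emph{product} of the two single-pattern arguments already carried out in \cite{BN-GWILF2, NZ-GWILF}; no conceptually new step is required beyond running those two arguments in parallel.
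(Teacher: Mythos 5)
Your proof is correct and follows essentially the same route as the paper and its references: the merged polynomial is handled by the same first-entry decomposition ($\pi_1 = v$) underlying \eqref{FE123} and \eqref{FE132}, with the $(s, x_i)$ and $(t, y_i)$ bookkeeping running in parallel since the two variable families track disjoint statistics, which is exactly the ``re-label, merge the reductions, multiply the coefficients'' observation the paper makes after Theorem~\ref{THM123n132}. Your transformation rules for the $c$- and $d$-statistics and the pattern-count increments $\sum_{k\ge v} c_k(\pi')$, $\sum_{k\ge v} d_k(\pi')$ check out, so the factorization of the summand is valid.
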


\noindent Observe that we combined the functional equations for the individual patterns $123$ and $132$ by re-labeling the $x_{i}$ variables for $132$ to $y_{i}$, merging the reductions in the $P_{n-1}$ in the natural way, and multiplying the coefficient terms for the $P_{n-1}$ within the summands. We again have that $P_{1}(123, 132; \; s, t; \; x_{1}; \; y_{1}) = 1$, so the functional equation can be used to recursively compute our desired quantity $P_{n}(123, 132; \; s, t; \; 1 \text{ [n times]}; \; 1 \text{ [n times]}) = f_{n}(123, 132; \; s, t)$.

More generally, we can similarly extend $f_{n}(\tau; \; t)$ to $k$ different patterns $\tau_{1}, \tau_{2}, \ldots, \tau_{k}$ and the corresponding variables $t_{1}, t_{2}, \ldots, t_{k}$ as:
\begin{equation}
	f_{n}(\tau_{1}, \tau_{2}, \ldots, \tau_{k}; \; t_{1}, t_{2}, \ldots, t_{k}) := \mathop{\sum} \limits_{\pi \in \SG_{n}} {t_{1}^{N_{\tau_{1}}(\pi)} t_{2}^{N_{\tau_{2}}(\pi)} \ldots t_{k}^{N_{\tau_{k}}(\pi)}}.
\end{equation}

\noindent The generalized polynomials $P_{n}$ can be similarly defined and analogous functional equations can be derived.

For example, suppose that we want to consider all length three patterns simultaneously. We will consider the patterns in lexicographical order (i.e., $\tau_{1} = 123, \; \tau_{2} = 132, \; \ldots, \; \tau_{6} = 321$). Our $f_{n}$ polynomial now becomes:
\begin{equation}
	f_{n}(123, 132, \ldots, 321; \; t_{1}, t_{2}, \ldots, t_{6}) := \mathop{\sum} \limits_{\pi \in \SG_{n}} {t_{1}^{N_{123}(\pi)} t_{2}^{N_{132}(\pi)} \ldots t_{6}^{N_{321}(\pi)}}. \label{FS3}
\end{equation}

\noindent For notational convenience, the polynomial $f_{n}(123, 132, \ldots, 321; \; t_{1}, t_{2}, \ldots, t_{6})$ will be denoted by $f_{n}(\SG_{3}; \; t_{1}, \ldots, t_{6})$. In \cite{BN-GWILF2}, we discuss how to extend this to the generalized polynomial $P_{n}$ and derive analogous functional equations.

The previous polynomial could also be refined further to consider all length three patterns and the pattern $1234$ simultaneously. We will again consider the length three patterns in lexicographical order. Our $f_{n}$ polynomial now becomes:
\begin{equation}
	f_{n}(1234, \SG_{3}; \; s, t_{1}, t_{2}, \ldots, t_{6}) := \mathop{\sum} \limits_{\pi \in \SG_{n}} {s^{N_{1234}(\pi)} t_{1}^{N_{123}(\pi)} t_{2}^{N_{132}(\pi)} \ldots t_{6}^{N_{321}(\pi)}}. \label{FI4S3}
\end{equation}
Just like the previous case, this polynomial can be extended to the analogous generalized polynomial $P_{n}$ and similar functional equations can be derived.


\subsection{Adapting multi-pattern functional equations}

The previously described $f_{n}$ polynomials (and their corresponding generalized $P_{n}$ polynomials and functional equations) can be easily specialized to consider a variety of scenarios. This allows us to quickly extract functional equations (and fast enumeration algorithms) in a number of cases.

The polynomial $f_{n}(\SG_{3}; \; t_{1}, \ldots, t_{6})$ (in Eq.~\ref{FS3}) can be specialized to consider any subset of $\SG_{3}$ by setting some $t_{i}$ variables to $1$. For example, $f_{n}(\SG_{3}; \; t_{1}, t_{2}, 1, 1, 1, 1)$ would give us the polynomial tracking $123$ and $132$ simultaneously. Setting $t_{i} = 1$ for $3 \leq i \leq 6$ in the generalized polynomial $P_{n}$ and its functional equation would reproduce Theorem~\ref{THM123n132}. This approach actually allows us to quickly compute the bi-variate polynomial
\begin{equation}
	f_{n}(\sigma, \tau; \; s, t) = \mathop{\sum} \limits_{\pi \in \SG_{n}} {s^{N_{\sigma}(\pi)} t^{N_{\tau}(\pi)}}
\end{equation}

\noindent for any patterns $\sigma, \tau \in \SG_{3}$ (with $\sigma \neq \tau$).

The polynomial $f_{n}(\SG_{3}; \; t_{1}, \ldots, t_{6})$ can actually be specialized in other ways. Suppose that we wanted to compute the bi-variate polynomial
\begin{equation}
	\mathop{\sum} \limits_{\pi \in \SG_{n}(132)} {s^{N_{123}(\pi)} t^{N_{321}(\pi)}}.
\end{equation}

Observe that this is exactly $f_{n}(\SG_{3}; \; s, 0, 1, 1, 1, t)$. In other words, we may find the coefficient of $t_{2}^{0}$ in $f_{n}(\SG_{3}; \; t_{1}, \ldots, t_{6})$ and then set $t_{3} = t_{4} = t_{5} = 1$ and $t_{1} = s, t_{6} = t$. The same approach can be used to compute the polynomial
\begin{equation}
	\mathop{\sum} \limits_{\pi \in \SG_{n}(132)} {s^{N_{\sigma}(\pi)} t^{N_{\tau}(\pi)}}.
\end{equation}

\noindent for any patterns $\sigma, \tau \in \SG_{3} \backslash \{ 132 \}$ (with $\sigma \neq \tau$).

The analogous specialization can be done to quickly compute
\begin{equation}
	\mathop{\sum} \limits_{\pi \in \SG_{n}(123)} {s^{N_{\sigma}(\pi)} t^{N_{\tau}(\pi)}}.
\end{equation}

\noindent for any patterns $\sigma, \tau \in \SG_{3} \backslash \{ 123 \}$ (with $\sigma \neq \tau$). In general, for any $p \in \SG_{3}$, we can quickly compute 
\begin{equation}
	\mathop{\sum} \limits_{\pi \in \SG_{n}(p)} {s^{N_{\sigma}(\pi)} t^{N_{\tau}(\pi)}}.
\end{equation}

\noindent for any patterns $\sigma, \tau \in \SG_{3} \backslash \{ p \}$ (with $\sigma \neq \tau$).

We can also adapt the polynomial $f_{n}(1234, \SG_{3}; \; s, t_{1}, t_{2}, \ldots, t_{6})$ (from Eq.~\ref{FI4S3}) similarly. In particular, we can quickly compute the polynomial
\begin{equation}
	\mathop{\sum} \limits_{\pi \in \SG_{n}(1234)} {s^{N_{\sigma}(\pi)} t^{N_{\tau}(\pi)}}.
\end{equation}

\noindent for any patterns $\sigma, \tau \in \SG_{3}$ (with $\sigma \neq \tau$) by setting $s = 0$ (i.e.~extracting the coefficient of $s^{0}$) and setting the appropriate $t_{i}$'s to $1$ in $f_{n}(1234, \SG_{3}; \; s, t_{1}, t_{2}, \ldots, t_{6})$.

The previously discussed functional equation approaches have been implemented in the Maple packages {\tt PDSn}, {\tt PDAV132}, {\tt PDAV123}, and {\tt PDAV1234}.


\section{Computing moments for random permutations}\label{SECmoments}


\subsection{Moments for random permutations from $\SG_{n}$}\label{SECmomsSn}

The previously discussed functional equations approach allows us to compute both rigorous and empirical statistical properties on permutations.

For some fixed $n$ and fixed pattern $\sigma \in \SG_{k}$, suppose that a permutation $\pi \in \SG_{n}$ is chosen uniformly at random. Let the random variable $X_{\sigma}(\pi)$ be the number of occurrences of the pattern $\sigma$ in $\pi$. It is not hard to compute the expected value (i.e., the first moment of the random variable $X$): $\mathbb{E}[X] = {n \choose k}/k!$. More generally, it was shown in \cite{DZ-SMC1} that each of the higher moments of $X$ is a polynomial in $n$. In particular, the $r$-th moment about the mean of $X$, which is $\mathbb{E}[(X - \mathbb{E}[X])^{r}]$, is a polynomial of degree $\left\lfloor r(k - 1/2) \right\rfloor$ for $r \geq 2$.\footnote{This corrects a minor inaccuracy in \cite{DZ-SMC1}.}

For the patterns $\sigma$ that were discussed in the previous section, the functional equations approach allows us to quickly compute $f_{n}(\sigma; \; t)$ for any desired $n$. Observe that $f_{n}(\sigma; \; t)/n!$ gives us the polynomial where the coefficient of $t^{i}$ is the probability that a randomly chosen $\pi \in \SG_{n}$ will have exactly $i$ copies of $\sigma$. The important point is that we can (rigorously) find a closed-form expression (in $n$) for the higher order moments of $X$ by computing sufficiently many terms to fit the polynomial.

For example, it was shown in \cite{DZ-SMC1} that the exact expression for the second moment (about the mean) of the random variable $X_{123}$ (over $\SG_{n}$) is:
\begin{equation}
	\frac{n (n-1) (n-2) (39 n^{2} + 102 n - 157)}{21600}
\end{equation}

\noindent and that the third moment (about the mean) of the random variable $X_{123}$ (over $\SG_{n}$) is:
\begin{equation}
	\frac{n (n-1) (n-2) (1437 n^{4} + 5592 n^{3} - 11277 n^{2} - 33990 n + 34082)}{6350400}
\end{equation}

Similarly, the exact expression for the second moment (about the mean) of the random variable $X_{132}$ (over $\SG_{n}$) is:
\begin{equation}
	\frac{n (n-1) (n-2) (21 n^{2} + 78 n + 77)}{21600}
\end{equation}

\noindent and that the third moment (about the mean) of the random variable $X_{132}$ (over $\SG_{n}$) is:
\begin{equation}
	\frac{n (n-1) (n-2) (129 n^{4} + 3705 n^{3} + 5355 n^{2} + 8655 n + 11356)}{12700800}
\end{equation}

We may also consider mixed moments for two patterns $\sigma$ and $\tau$. Suppose that a permutation $\pi$ is chosen uniformly at random from $\SG_{n}$, and again let the random variable $X_{\sigma}(\pi)$ be the number of occurrences of pattern $\sigma$ in $\pi$ (and equivalently for $X_{\tau}(\pi)$). It was also shown in \cite{DZ-SMC1} that the mixed moments of the random variables $X_{\sigma}$ and $X_{\tau}$ (about their respective means) are also polynomials in $n$. This allows us to rigorously find closed-form expressions (in $n$) for the higher order mixed moments by computing enough terms to find the polynomial.

For example, the covariance of the two random variables $X_{123}$ and $X_{132}$ is:
\begin{equation}
	\frac{n (n-1) (n-2) (18 n^{2} - 51 n - 109)}{21600}
\end{equation}

\noindent while the covariance of the two random variables $X_{123}$ and $X_{312}$ is:
\begin{equation}
	- \frac{n (n-1) ( n-2) (39 n^{2} - 48 n - 7)}{43200}
\end{equation}

\noindent and the covariance of the two random variables $X_{123}$ and $X_{321}$ is:
\begin{equation}
	- \frac{n (n-1) (n-2) (9 n^{2} + 12 n - 92)}{5400}
\end{equation}

\noindent Similar results for other random variables can be derived using the Maple packages available on the authors' website.


\subsection{Moments for random permutations from $\SG_{n}(\tau)$}

There has been a flurry of recent activity studying occurrences of patterns in the set of permutations avoiding specific patterns. Many of the recent articles focus on counting the total number of occurrences of a pattern in $\SG_{n}(132)$ or in $\SG_{n}(123)$. Some examples (as previously mentioned) include \cite{Bona2, Bona4, Homberger, Rudolph}. It is important to note that finding the total number of occurrences of pattern $\sigma$ in the set $\SG_{n}(\tau)$ is equivalent to picking a permutation uniformly at random from $\SG_{n}(\tau)$ and finding the expected value $\mathbb{E}[X_{\sigma}]$ (assuming that the enumeration of $\SG_{n}(\tau)$ is known).

In the previous section, we were able to rigorously derive closed-form
expressions for moments of the random variable $X_{\sigma}(\pi)$ when the
permutation $\pi$ was randomly chosen from $\SG_{n}$. While we currently
cannot derive similar rigorous results for random permutations from
$\SG_{n}(\tau)$, we can still compute numerical moments for a variety of
cases. Interestingly, a number of such random variables appear to \emph{not}
be asymptotically normal (as opposed to when $\pi \in \SG_{n}$, where
Mikl\'os B\'ona showed that such random variables are asymptotically normal
\cite{Bona}, see also Section \ref{SECasymom}).

\FloatBarrier

\subsubsection{Permutations from $\SG_{132}$}

Suppose a permutation is chosen uniformly at random from $\SG_{n}(132)$. Using the Maple packages that accompany this article, we can compute many empirical moments. The expected values of the random variables $X_{123}$, $X_{312}$, and $X_{321}$ for $1 \leq n \leq 10$ can be found in Table~\ref{tab:t132mom1}.

\begin{table}[!h] 
	\centering
	\begin{tabular}{|c|r|r|r|r|r|r|r|r|r|r|}
		\hline
		Pattern & \cheader{$n=1$} & \cheader{$n=2$} & \cheader{$n=3$} & \cheader{$n=4$} & \cheader{$n=5$} & \cheader{$n=6$} & \cheader{$n=7$} & \cheader{$n=8$} & \cheader{$n=9$} & \cheader{$n=10$}\\
		\hline
		$123$ & $0$ & $0$ & $0.200$ & $0.714$ & $1.619$ & $2.970$ & $4.809$ & $7.171$ & $10.083$ & $13.570$\\
		\hline
		$312$ & $0$ & $0$ & $0.200$ & $0.786$ & $1.929$ & $3.790$ & $6.513$ & $10.244$ & $15.115$ & $21.253$\\
		\hline
		$321$ & $0$ & $0$ & $0.200$ & $0.929$ & $2.595$ & $5.667$ & $10.653$ & $18.097$ & $28.572$ & $42.672$\\
		\hline
	\end{tabular}
	\caption{Expected values (first moments) of $X_{123}(\pi)$, $X_{312}(\pi)$, and $X_{321}(\pi)$, where $\pi$ is chosen uniformly at random from $\SG_{n}(132)$.}
	\label{tab:t132mom1}
\end{table}

\medskip

The second moments (about the mean) of the random variables $X_{123}$, $X_{312}$, and $X_{321}$ for $1 \leq n \leq 10$ can be found in Table~\ref{tab:t132mom2}.

\begin{table}[!h] 
	\centering
	\begin{tabular}{|c|r|r|r|r|r|r|r|r|r|r|}
		\hline
		Pattern & \cheader{$n=1$} & \cheader{$n=2$} & \cheader{$n=3$} & \cheader{$n=4$} & \cheader{$n=5$} & \cheader{$n=6$} & \cheader{$n=7$} & \cheader{$n=8$} & \cheader{$n=9$} & \cheader{$n=10$}\\
		\hline
		$123$ & $0$ & $0$ & $0.160$ & $1.204$ & $4.617$ & $12.757$ & $28.933$ & $57.463$ & $103.720$ & $174.140$\\
		\hline
		$312$ & $0$ & $0$ & $0.160$ & $1.026$ & $3.733$ & $10.213$ & $23.392$ & $47.403$ & $87.787$ & $151.710$\\
		\hline
		$321$ & $0$ & $0$ & $0.160$ & $1.352$ & $6.003$ & $19.101$ & $49.313$ & $110.180$ & $221.360$ & $409.960$\\
		\hline
	\end{tabular}
	\caption{Second moments (about the mean) of $X_{123}(\pi)$, $X_{312}(\pi)$, and $X_{321}(\pi)$, where $\pi$ is chosen uniformly at random from $\SG_{n}(132)$.}
	\label{tab:t132mom2}
\end{table}

\medskip

Data for the higher moments can be found on the authors websites. For example, the $r$-th standardized moments for $X_{312}$ when $3 \leq r \leq 6$ and $15 \leq n \leq 20$ can be found in Table~\ref{tab:t132nonnorm}.

\begin{table}[!h] 
	\centering
	\begin{tabular}{|c|r|r|r|r|r|r|r|r|r|r|}
		\hline
		$r$-th moment & \cheader{$n=15$} & \cheader{$n=16$} & \cheader{$n=17$} & \cheader{$n=18$} & \cheader{$n=19$} & \cheader{$n=20$}\\
		\hline
		$r=3$ & $0.41867$ & $0.42461$ & $0.43073$ & $0.43690$ & $0.44303$ & $0.44906$\\
		\hline
		$r=4$ & $2.92652$ & $2.95682$ & $2.98412$ & $3.00889$ & $3.03152$ & $3.05231$\\
		\hline
		$r=5$ & $3.59958$ & $3.69377$ & $3.78619$ & $3.87633$ & $3.96389$ & $4.04860$\\
		\hline
		$r=6$ & $14.79293$ & $15.24562$ & $15.66679$ & $16.06007$ & $16.42853$ & $16.77483$\\
		\hline
	\end{tabular}
	\caption{$r$-th standardized moments for $X_{312}(\pi)$ for $3 \leq r \leq 6$, where $\pi$ is chosen uniformly at random from $\SG_{n}(132)$.}
	\label{tab:t132nonnorm}
\end{table}

\medskip

It is interesting to note that the random variable $X_{312}$ does not appear to be asymptotically normal since the $3$-rd and $5$-th standard moments appear to be increasing (as opposed to going to $0$ as a normal distribution would) and the $6$-th moment appears to be larger than $15$ (the value for a normal distribution).

This approach can also be used to consider the mixed $(i,j)$ moments. For example, the mixed $(i,j)$ moments of the random variables $X_{123}$ and $X_{321}$ for $3 \leq n \leq 10$ can be found in Table~\ref{tab:t132mixmom}.

\begin{table}[!h] 
	\centering
	\begin{tabular}{|c|r|r|r|r|r|r|r|r|r|r|}
		\hline
		$(i,j)$ & \cheader{$n=3$} & \cheader{$n=4$} & \cheader{$n=5$} & \cheader{$n=6$} & \cheader{$n=7$} & \cheader{$n=8$} & \cheader{$n=9$} & \cheader{$n=10$}\\
		\hline
		$(1,1)$ & $-0.040$ & $-0.663$ & $-3.392$ & $-11.162$ & $-28.714$ & $-62.970$ & $-123.370$ & $-222.180$\\
		\hline
		$(1,2)$ & $-0.024$ & $-0.350$ & $-1.445$ & $-0.404$ & $21.587$ & $127.800$ & $478.610$ & $1417.300$\\
		\hline
		$(2,1)$ & $-0.024$ & $-0.644$ & $-6.657$ & $-38.272$ & $-154.230$ & $-491.000$ & $-1322.000$ & $-3140.400$\\
		\hline
		$(2,2)$ & $0.011$ & $1.288$ & $33.666$ & $382.200$ & $2650.400$ & $13264.000$ & $52628.000$ & $175500.000$\\
		\hline
	\end{tabular}
	\caption{Mixed $(i,j)$ moments of $X_{123}(\pi)$ and $X_{321}(\pi)$, where $\pi$ is chosen uniformly at random from $\SG_{n}(132)$.}
	\label{tab:t132mixmom}
\end{table}

\medskip

Analogous data and outputs can be found on the authors websites.

\FloatBarrier

\subsubsection{Permutations from $\SG_{123}$}

Suppose a permutation is chosen uniformly at random from $\SG_{n}(123)$. Using the Maple packages that accompany this article, we can compute many empirical moments. The expected values of the random variables $X_{132}$, $X_{312}$, and $X_{321}$ for $1 \leq n \leq 10$ can be found in Table~\ref{tab:t123mom1}.

\begin{table}[!h] 
	\centering
	\begin{tabular}{|c|r|r|r|r|r|r|r|r|r|r|}
		\hline
		Pattern & \cheader{$n=1$} & \cheader{$n=2$} & \cheader{$n=3$} & \cheader{$n=4$} & \cheader{$n=5$} & \cheader{$n=6$} & \cheader{$n=7$} & \cheader{$n=8$} & \cheader{$n=9$} & \cheader{$n=10$}\\
		\hline
		$132$ & $0$ & $0$ & $0.200$ & $0.643$ & $1.357$ & $2.364$ & $3.678$ & $5.314$ & $7.281$ & $9.589$\\
		\hline
		$312$ & $0$ & $0$ & $0.200$ & $0.786$ & $1.929$ & $3.788$ & $6.513$ & $10.244$ & $15.115$ & $21.253$\\
		\hline
		$321$ & $0$ & $0$ & $0.200$ & $1.143$ & $3.429$ & $7.697$ & $14.618$ & $24.884$ & $39.208$ & $58.317$\\
		\hline
	\end{tabular}
	\caption{Expected values (first moments) of $X_{132}(\pi)$, $X_{312}(\pi)$, and $X_{321}(\pi)$, where $\pi$ is chosen uniformly at random from $\SG_{n}(123)$.}
	\label{tab:t123mom1}
\end{table}

\medskip

The second moments (about the mean) of the random variables $X_{132}$, $X_{312}$, and $X_{321}$ for $1 \leq n \leq 10$ can be found in Table~\ref{tab:t123mom2}.

\begin{table}[!h] 
	\centering
	\begin{tabular}{|c|r|r|r|r|r|r|r|r|r|r|}
		\hline
		Pattern & \cheader{$n=1$} & \cheader{$n=2$} & \cheader{$n=3$} & \cheader{$n=4$} & \cheader{$n=5$} & \cheader{$n=6$} & \cheader{$n=7$} & \cheader{$n=8$} & \cheader{$n=9$} & \cheader{$n=10$}\\
		\hline
		$132$ & $0$ & $0$ & $0.160$ & $0.801$ & $2.468$ & $5.959$ & $12.344$ & $22.978$ & $39.506$ & $63.877$\\
		\hline
		$312$ & $0$ & $0$ & $0.160$ & $0.740$ & $2.114$ & $4.804$ & $9.532$ & $17.303$ & $29.501$ & $48.000$\\
		\hline
		$321$ & $0$ & $0$ & $0.160$ & $1.122$ & $4.293$ & $12.423$ & $30.287$ & $65.419$ & $128.910$ & $236.250$\\
		\hline
	\end{tabular}
	\caption{Second moments (about the mean) of $X_{132}(\pi)$, $X_{312}(\pi)$, and $X_{321}(\pi)$, where $\pi$ is chosen uniformly at random from $\SG_{n}(123)$.}
	\label{tab:t123mom2}
\end{table}

\medskip

Data for the higher moments can be found on the authors websites. For example, the $r$-th standardized moments for $X_{132}$ when $3 \leq r \leq 6$ and $15 \leq n \leq 20$ can be found in Table~\ref{tab:t123nonnorm}.

\begin{table}[!h] 
	\centering
	\begin{tabular}{|c|r|r|r|r|r|r|r|r|r|r|}
		\hline
		$r$-th moment & \cheader{$n=15$} & \cheader{$n=16$} & \cheader{$n=17$} & \cheader{$n=18$} & \cheader{$n=19$} & \cheader{$n=20$}\\
		\hline
		$r=3$ & $1.53492$ & $1.54020$ & $1.54458$ & $1.54823$ & $1.55129$ & $1.55385$\\
		\hline
		$r=4$ & $6.28717$ & $6.33967$ & $6.38469$ & $6.42356$ & $6.45735$ & $6.48687$\\
		\hline
		$r=5$ & $23.59568$ & $23.99423$ & $24.34048$ & $24.64315$ & $24.90923$ & $25.14433$\\
		\hline
		$r=6$ & $108.90240$ & $111.90699$ & $114.55548$ & $116.90184$ & $118.99022$ & $120.85698$\\
		\hline
	\end{tabular}
	\caption{$r$-th standardized moments for $X_{132}(\pi)$ for $3 \leq r \leq 6$, where $\pi$ is chosen uniformly at random from $\SG_{n}(123)$.}
	\label{tab:t123nonnorm}
\end{table}

\medskip

It is interesting to note that the random variable $X_{132}$ does not appear to be asymptotically normal since the $3$-rd and $5$-th standard moments appear to be increasing (as opposed to going to $0$ as a normal distribution would), the $4$-th moment appears to be larger than $3$ (the value for a normal distribution), and the $6$-th moment appears to be substantially larger than $15$ (the value for a normal distribution).

This approach can also be used to consider the mixed $(i,j)$ moments. For example, the mixed $(i,j)$ moments of the random variables $X_{132}$ and $X_{312}$ for $3 \leq n \leq 10$ can be found in Table~\ref{tab:t123mixmom}.

\begin{table}[!h] 
	\centering
	\begin{tabular}{|c|r|r|r|r|r|r|r|r|r|r|}
		\hline
		$(i,j)$ & \cheader{$n=3$} & \cheader{$n=4$} & \cheader{$n=5$} & \cheader{$n=6$} & \cheader{$n=7$} & \cheader{$n=8$} & \cheader{$n=9$} & \cheader{$n=10$}\\
		\hline
		$(1,1)$ & $-0.040$ & $-0.219$ & $-0.641$ & $-1.362$ & $-2.332$ & $-3.326$ & $-3.890$ & $-3.269$\\
		\hline
		$(1,2)$ & $-0.024$ & $-0.099$ & $-0.039$ & $0.841$ & $3.917$ & $11.254$ & $25.372$ & $48.890$\\
		\hline
		$(2,1)$ & $-0.024$ & $-0.386$ & $-2.261$ & $-8.566$ & $-24.874$ & $-60.099$ & $-126.620$ & $-239.570$\\
		\hline
		$(2,2)$ & $0.011$ & $0.551$ & $6.309$ & $39.592$ & $172.880$ & $592.420$ & $1709.800$ & $4350.100$\\
		\hline
	\end{tabular}
	\caption{Mixed $(i,j)$ moments of $X_{132}(\pi)$ and $X_{312}(\pi)$, where $\pi$ is chosen uniformly at random from $\SG_{n}(123)$.}
	\label{tab:t123mixmom}
\end{table}

\medskip

Analogous data and outputs can be found on the authors websites.

\FloatBarrier
\subsubsection{Permutations from $\SG_{1234}$}

Suppose a permutation is chosen uniformly at random from $\SG_{n}(1234)$. Using the Maple packages that accompany this article, we can compute many empirical moments. The expected values of the random variables $X_{123}$, $X_{132}$, $X_{312}$, and $X_{321}$ for $1 \leq n \leq 10$ can be found in Table~\ref{tab:t1234mom1}.

\begin{table}[!h] 
	\centering
	\begin{tabular}{|c|r|r|r|r|r|r|r|r|r|r|}
		\hline
		Pattern & \cheader{$n=1$} & \cheader{$n=2$} & \cheader{$n=3$} & \cheader{$n=4$} & \cheader{$n=5$} & \cheader{$n=6$} & \cheader{$n=7$} & \cheader{$n=8$} & \cheader{$n=9$} & \cheader{$n=10$}\\
		\hline
		$123$ & $0$ & $0$ & $0.167$ & $0.522$ & $1.049$ & $1.739$ & $2.592$ & $3.611$ & $4.796$ & $6.153$\\
		\hline
		$132$ & $0$ & $0$ & $0.167$ & $0.696$ & $1.709$ & $3.279$ & $5.457$ & $8.283$ & $11.789$ & $16.004$\\
		\hline
		$312$ & $0$ & $0$ & $0.167$ & $0.696$ & $1.796$ & $3.684$ & $6.575$ & $10.679$ & $16.202$ & $23.341$\\
		\hline
		$321$ & $0$ & $0$ & $0.167$ & $0.696$ & $1.942$ & $4.335$ & $8.344$ & $14.466$ & $23.223$ & $35.158$\\
		\hline
	\end{tabular}
	\caption{Expected values (first moments) of $X_{123}(\pi)$, $X_{132}(\pi)$, $X_{312}(\pi)$, and $X_{321}(\pi)$, where $\pi$ is chosen uniformly at random from $\SG_{n}(1234)$.}
	\label{tab:t1234mom1}
\end{table}

\medskip

The second moments (about the mean) of the random variables $X_{123}$, $X_{312}$, and $X_{321}$ for $1 \leq n \leq 10$ can be found in Table~\ref{tab:t1234mom2}.

\begin{table}[!h] 
	\centering
	\begin{tabular}{|c|r|r|r|r|r|r|r|r|r|r|}
		\hline
		Pattern & \cheader{$n=1$} & \cheader{$n=2$} & \cheader{$n=3$} & \cheader{$n=4$} & \cheader{$n=5$} & \cheader{$n=6$} & \cheader{$n=7$} & \cheader{$n=8$} & \cheader{$n=9$} & \cheader{$n=10$}\\
		\hline
		$123$ & $0$ & $0$ & $0.139$ & $0.510$ & $1.172$ & $2.236$ & $3.863$ & $6.257$ & $9.654$ & $14.324$\\
		\hline
		$132$ & $0$ & $0$ & $0.139$ & $0.820$ & $2.828$ & $7.332$ & $15.959$ & $30.863$ & $54.767$ & $91.002$\\
		\hline
		$312$ & $0$ & $0$ & $0.139$ & $0.820$ & $2.667$ & $6.524$ & $13.484$ & $24.911$ & $42.468$ & $68.157$\\
		\hline
		$321$ & $0$ & $0$ & $0.139$ & $0.994$ & $3.764$ & $10.566$ & $24.936$ & $52.338$ & $100.740$ & $181.280$\\
		\hline
	\end{tabular}
	\caption{Second moments (about the mean) of $X_{123}(\pi)$, $X_{132}(\pi)$, $X_{312}(\pi)$, and $X_{321}(\pi)$, where $\pi$ is chosen uniformly at random from $\SG_{n}(1234)$.}
	\label{tab:t1234mom2}
\end{table}

\medskip

Data for the higher moments can be found on the authors websites. For example, the $r$-th standardized moments for $X_{123}$ when $3 \leq r \leq 6$ and $13 \leq n \leq 18$ can be found in Table~\ref{tab:t1234nonnorm}.

\begin{table}[!h] 
	\centering
	\begin{tabular}{|c|r|r|r|r|r|r|r|r|r|r|}
		\hline
		$r$-th moment & \cheader{$n=13$} & \cheader{$n=14$} & \cheader{$n=15$} & \cheader{$n=16$} & \cheader{$n=17$} & \cheader{$n=18$}\\
		\hline
		$r=3$ & $1.14140$ & $1.16076$ & $1.17518$ & $1.18585$ & $1.19365$ & $1.19926$\\
		\hline
		$r=4$ & $5.14732$ & $5.21356$ & $5.26297$ & $5.29971$ & $5.32683$ & $5.34656$\\
		\hline
		$r=5$ & $16.61123$ & $17.07925$ & $17.43934$ & $17.71522$ & $17.92523$ & $18.08348$\\
		\hline
		$r=6$ & $74.59126$ & $77.40043$ & $79.60569$ & $81.33022$ & $82.67201$ & $83.70841$\\
		\hline
	\end{tabular}
	\caption{$r$-th standardized moments for $X_{123}(\pi)$ for $3 \leq r \leq 6$, where $\pi$ is chosen uniformly at random from $\SG_{n}(1234)$.}
	\label{tab:t1234nonnorm}
\end{table}

\medskip

It is interesting to note that the random variable $X_{123}$ does not appear to be asymptotically normal since the $3$-rd and $5$-th standard moments appear to be increasing (as opposed to going to $0$ as a normal distribution would), the $4$-th moment appears to be larger than $3$ (the value for a normal distribution), and the $6$-th moment appears to be substantially larger than $15$ (the value for a normal distribution).

This approach can also be used to consider the mixed $(i,j)$ moments. For example, the mixed $(i,j)$ moments of the random variables $X_{123}$ and $X_{321}$ for $3 \leq n \leq 10$ can be found in Table~\ref{tab:t1234mixmom}.

\begin{table}[!h] 
	\centering
	\begin{tabular}{|c|r|r|r|r|r|r|r|r|r|r|}
		\hline
		$(i,j)$ & \cheader{$n=3$} & \cheader{$n=4$} & \cheader{$n=5$} & \cheader{$n=6$} & \cheader{$n=7$} & \cheader{$n=8$} & \cheader{$n=9$} & \cheader{$n=10$}\\
		\hline
		$(1,1)$ & $-0.028$ & $-0.363$ & $-1.298$ & $-3.258$ & $-6.892$ & $-13.121$ & $-23.171$ & $-38.611$\\
		\hline
		$(1,2)$ & $-0.019$ & $-0.266$ & $-1.674$ & $-5.958$ & $-15.301$ & $-31.716$ & $-55.546$ & $-82.648$\\
		\hline
		$(2,1)$ & $-0.019$ & $-0.166$ & $-0.505$ & $-1.531$ & $-4.798$ & $-13.664$ & $-34.352$ & $-77.387$\\
		\hline
		$(2,2)$ & $0.007$ & $0.386$ & $4.969$ & $33.937$ & $159.600$ & $593.990$ & $1880.700$ & $5274.100$\\
		\hline
	\end{tabular}
	\caption{Mixed $(i,j)$ moments of $X_{123}(\pi)$ and $X_{321}(\pi)$, where $\pi$ is chosen uniformly at random from $\SG_{n}(1234)$.}
	\label{tab:t1234mixmom}
\end{table}

\medskip

Analogous data and outputs can be found on the authors websites.

\FloatBarrier
\section{Joint asymptotic normality of multiple patterns}\label{SECasymom}

In this section we let $\pi$ be a permutation
chosen uniformly at random from $\SG_n$
(without any condition) and we study the joint distribution of the random
variables $\xn\gs:=X_\gs(\pi)$, the number of copies of $\gs$ in $\pi$, for
different patterns $\gs\in\SG_* :=\bigcup_{k=1}^\infty \SG_k$ .
We consider asymptotics as $n\to\infty$ for (one or several) fixed $\gs$.

Each $\xn\gs$ has an asymptotic normal distribution, as was shown by Bona
\cite{Bona} (see also \cite{Bona3}).
We give another (perhaps simpler) proof of this; moreover, we extend the
result to joint asymptotic normality for several patterns $\gs$.

The asymptotic variances and covariances depend on the patterns in a 
slightly complicated way, so we begin with some  definitions.
For $k\ge 1$ and $1\le i \le k$,
define
\begin{equation}\label{g}
  g_{k,i}(x) := \binom{k-1}{i-1} x^{i-1} (1-x)^{k-i}.
\end{equation}

For a permutation $\gs\in S_k$, define
\begin{equation}\label{G}
  G_\gs(x,y) := \frac{1}{(k-1)!^2} 
\left(\sum_{i=1}^k g_{k,i}(x) g_{k,\gs(i)}(y) -\frac 1k \right).
\end{equation}

Let $Z_\gs$, $\gs\in\SG_*$, be jointly normal random variables with
$\E Z_\gs=0$ and (co)variances
\begin{equation}\label{gS}
  \Cov(Z_\gs,Z_\gt) = 
\gS_{\gs,\gt}:=
\innprod{G_\gs,G_\gt}_{L^2(\oi^2)}
:=\intoi\intoi G_\gs(x,y) G_\gt(x,y)\dd x\dd y.
\end{equation}
(Such normal random variables exist since the matrix
$(\gS_{\gs,\gt})_{\gs,\gt}$ is non-negative definite. As is well
known, the joint distribution is uniquely defined by the means and covariances.)

We denote the length of a permutation $\gs$ by $|\gs|$, and let $\dto$ denote
convergence in distribution of random variables.

\begin{theorem}\label{T1}

For every pattern $\gs\in\SG_*$, as $n\to\infty$,
\begin{equation}\label{t1a}
	\frac{\xn\gs-\E\xn\gs}{n^{|\gs|-1/2} }
	=
	\frac{\xn\gs-\frac1{|\gs|!}\binom{n}{|\gs|}}{n^{|\gs|-1/2} }
	\dto Z_\gs.
\end{equation}

Moreover, this holds jointly for any finite family of patterns $\gs$.
Furthermore, all (joint) moments converge; 
in particular, for any permutations $\gs,\gt$
\begin{equation}\label{t1b}
 \frac{ \Cov(\xn\gs,\xn\gt)}{n^{|\gs|+|\gt|-1}} \to \gS_{\gs,\gt}.
\end{equation}

\end{theorem}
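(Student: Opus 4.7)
The strategy is to realize the uniformly random permutation $\pi\in\SG_n$ as the rank sequence of $n$ i.i.d.\ $U_1,\dots,U_n\sim\text{Uniform}(\oi)$; writing $k:=|\gs|$, under this coupling
\[ \xn\gs = \sum_{1\le i_1<\cdots<i_k\le n} \ett{\red(U_{i_1},\dots,U_{i_k})=\gs}, \]
so $\xn\gs$ becomes a (non-symmetric) U-statistic with a bounded indicator kernel. The theorem then reduces to a U-statistic CLT with joint moment convergence, which I would attack via the Hoeffding decomposition.

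The first step is to compute the first-order Hoeffding projection $Y_m^{(\gs)} := \E[\xn\gs \mid U_m] - \E\xn\gs$. By exchangeability of the other uniforms,
\[ \PP\bigl[\red(U_1,\dots,U_k)=\gs \,\big|\, U_j=u\bigr] = \frac{g_{k,\gs(j)}(u)}{(k-1)!}, \]
since conditional on $U_j=u$ the remaining $k-1$ variables split binomially around $u$ and are uniformly order-labelled on each side. Summing over the $\binom{m-1}{j-1}\binom{n-m}{k-j}$ subsets that place position $m$ in slot $j$, and using $\sum_{j=1}^k g_{k,j}\equiv1$ together with the Riemann-sum expansion $\binom{m-1}{j-1}\binom{n-m}{k-j}=\frac{n^{k-1}}{(k-1)!}\,g_{k,j}(m/n)+O(n^{k-2})$, gives
\[ Y_m^{(\gs)} = n^{k-1}\,G_\gs(m/n,U_m) + O(n^{k-2}) \]
uniformly in $m$ and $U_m$, with $G_\gs$ exactly the function defined in \eqref{G}.

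Next I would invoke the Hoeffding decomposition $\xn\gs - \E\xn\gs = \sum_{m=1}^n Y_m^{(\gs)} + R_n^{(\gs)}$, where the remainder $R_n^{(\gs)}$ collects the $L^2$-orthogonal components of orders $2,\dots,k$. A standard variance count (each such $j$-th order component is a sum over $\binom{n}{j}$ degenerate pieces, each with $L^2$-norm of order $\binom{n-j}{k-j}\|\tilde h_{\gs,j}\|_2$) yields $\Var(R_n^{(\gs)})=O(n^{2k-2})$, so $R_n^{(\gs)}/n^{k-1/2}\to 0$ in $L^2$. The main term $\sum_m Y_m^{(\gs)}$ is a sum of $n$ independent bounded centered variables whose variance, after normalization by $n^{2k-1}$, converges via Riemann integration to $\intoi\intoi G_\gs^2 = \gS_{\gs,\gs}$, and the Lindeberg condition is trivial from the uniform boundedness of $G_\gs$. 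Joint asymptotic normality for any finite family $\gs_1,\dots,\gs_r$ then follows from the multivariate Lindeberg CLT applied to the independent $\mathbb{R}^r$-valued summands $\bigl(n^{1/2-|\gs_i|}Y_m^{(\gs_i)}\bigr)_{i=1}^r$; the asymptotic cross-covariances are computed by the same Riemann-sum argument to be exactly $\intoi\intoi G_{\gs_i}G_{\gs_{i'}} = \gS_{\gs_i,\gs_{i'}}$.

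Finally, to upgrade to convergence of all joint moments (in particular \eqref{t1b}), I would bound higher $L^p$-norms of the Hoeffding components: by hypercontractivity for bounded functions of independent bounded variables, each $j$-th order component satisfies $\|W_{n,j}^{(\gs)}\|_p \le C_{p,j}\|W_{n,j}^{(\gs)}\|_2 = O(n^{k-j/2})$ for every $p<\infty$. Consequently every power of $(\xn\gs-\E\xn\gs)/n^{k-1/2}$ is uniformly integrable, so the distributional CLT upgrades to convergence of all joint moments. I expect the main technical obstacle to be this last step---establishing uniform $L^p$ bounds for the higher-order non-symmetric Hoeffding terms; it can alternatively be carried out by a direct combinatorial computation of the joint centered moments (which are polynomials in $n$ whose leading coefficients match the Gaussian Wick formula).
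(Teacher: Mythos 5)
Your proposal is correct in outline, and it follows precisely the route that the paper deliberately sidesteps. You work directly with the asymmetric representation \eqref{rep1} and compute its first-order Hoeffding (ANOVA) projections, obtaining $\E[\xn\gs\mid U_m]-\E\xn\gs = n^{k-1}G_\gs(m/n,U_m)+O(n^{k-2})$ via the estimate $\binom{m-1}{j-1}\binom{n-m}{k-j}=\frac{n^{k-1}}{(k-1)!}\,g_{k,j}(m/n)+O(n^{k-2})$; the deterministic positions $m/n$ then feed into a triangular-array Lindeberg CLT, and the covariances $\gS_{\gs,\gt}$ of \eqref{gS} emerge as Riemann sums. The paper instead introduces an auxiliary i.i.d.\ sample $V_1,\dots,V_n$ and the pairs $W_j=(U_j,V_j)$, which converts \eqref{rep1} into a sum over all ordered tuples of distinct indices, \eqref{rep2}--\eqref{rep3}; the projections $F_\gs(W_j)=G_\gs(V_j,U_j)$ are then genuinely i.i.d., so Hoeffding's classical theorem applies verbatim and the covariance is a single two-dimensional integral computed from \eqref{f1}. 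In effect your deterministic coordinate $m/n$ plays the role of the paper's randomization $V_j$, and your conditional-probability computation is identical to the paper's \eqref{f1}. What your route buys is self-containedness (no auxiliary randomization, no appeal to the theory of exchangeable $U$-statistics); what it costs is exactly what the paper says it wished to avoid---uniform Riemann-sum estimates and a non-identically-distributed CLT---though your sketch shows these are manageable. Both proofs share the same skeleton: linear projection plus an $O(n^{2k-2})$ variance bound on the degenerate remainder.

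The one step I would not accept as written is the moment-convergence argument. The inequality $\|W_{n,j}\|_p\le C_{p,j}\|W_{n,j}\|_2$ with constants depending only on $p$ and $j$ is not a valid black box for degenerate $U$-statistic components: unlike Gaussian or Rademacher chaos, degenerate kernels on product spaces admit no dimension-free hypercontractivity (kernels with $\|h\|_\infty\gg \sqrt n\,\|h\|_2$, e.g.\ $h(x,y)=(\ett{x\in A}-|A|)(\ett{y\in A}-|A|)$ with $|A|=n^{-2}$, already violate the $L^4$--$L^2$ comparison for $j=2$). What is true, and suffices, is the weaker bound $\|W_{n,j}\|_{2p}=O(n^{k-j/2})$ for each fixed $p$: expand the $2p$-th moment, note that complete degeneracy kills every term in which some index appears in only one tuple, so at most $pj$ distinct indices survive among the $2p$ tuples, and the boundedness of the kernel controls each surviving term. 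This is essentially the paper's own argument, except the paper applies it in one stroke to the whole centered sum in its symmetric representation, \eqref{mom}: nonvanishing terms force each of the $m$ index sets to share an index with another, leaving at most $mk-m/2$ distinct indices, whence the $m$-th central moment is $O(n^{mk-m/2})$ and uniform integrability follows. Note also that you do not need the leading coefficients to match the Gaussian Wick formula: distributional convergence plus boundedness of normalized even moments already upgrades to convergence of all joint moments, including \eqref{t1b}, which is how the paper (and, implicitly, your primary route) concludes.
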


Before giving the proof we give some comments.
First, as noted above, 
if $\gs$ has length $|\gs|=k$,
\begin{equation}
  \E \xn\gs = \binom nk \frac1{k!}
	\sim \frac1{k!^2} n^k,
	\qquad
	\text{as $n\to\infty$}.
\end{equation}

The asymptotic covariances $\gS_{\gs,\gt}$ can be computed explicitly.
 By a beta integral,
\begin{equation}\label{intg}
  \int_0^1 g_{k,i}(x)\, dx
	=
	\binom{k-1}{i-1} \frac{\Gamma(i)\Gamma(k-i+1)}{\Gamma(k+1)}
	=\frac 1k,
\end{equation}
and similarly, for any $k,\ell\ge1$ and $1\le i\le k$, $1\le j\le\ell$,
\begin{equation} \label{intgg}
  \begin{split}
  \int_0^1 g_{k,i}(x) g_{\ell,j}(x)\, dx
	&=
	\binom{k-1}{i-1} \binom{\ell-1}{j-1} 
	\frac{\Gamma(i+j-1)\Gamma(k+\ell-i-j+1)}{\Gamma(k+\ell)}
	\\&
	= \frac{(k-1)!\, (\ell-1)!}{(k+\ell-1)!}
	\binom{i+j-2}{i-1} \binom{k+\ell-i-j}{k-i} .
  \end{split}
\end{equation}

It follows from \eqref{intg} that, if $|\gs|=k$,
\begin{equation}
  \begin{split}
	\intoi\intoi
	\sum_{i=1}^k g_{k,i}(x) g_{k,\gs(i)}(y)
	\dd x\dd y
	= \frac{k}{k^2}=\frac1{k}
  \end{split}
\end{equation}
which implies, using \eqref{intgg} twice, if further $|\gt|=\ell$,
\begin{multline*}
	\intoi\intoi
	\left(\sum_{i=1}^k g_{k,i}(x) g_{k,\gs(i)}(y) -\frac 1k \right)
	\left(\sum_{j=1}^\ell g_{\ell,j}(x) g_{\ell,\gt(j)}(y) -\frac 1\ell \right)
	\dd x\dd y
	\\
	=
	\intoi\intoi
	\sum_{i=1}^k g_{k,i}(x) g_{k,\gs(i)}(y) 
	\sum_{j=1}^\ell g_{\ell,j}(x) g_{\ell,\gt(j)}(y) 
	\dd x\dd y -\frac{1}{k\ell}  
	\\
	=
	\sum_{i=1}^k \sum_{j=1}^\ell 
	\intoi
	g_{k,i}(x) 
	g_{\ell,j}(x) 
	\dd x
	\intoi
	g_{k,\gs(i)}(y) 
	g_{\ell,\gt(j)}(y) 
	\dd y -\frac{1}{k\ell}  
	\\
	=
	\sum_{i=1}^k \sum_{j=1}^\ell 
	\frac{(k-1)!^2\, (\ell-1)!^2}{(k+\ell-1)!^2}
	\binom{i+j-2}{i-1} \binom{k+\ell-i-j}{k-i} 
	\binom{\gs(i)+\gt(j)-2}{\gs(i)-1} \binom{k+\ell-\gs(i)-\gt(j)}{k-\gs(i)} 
	\\
	-\frac{1}{k\ell}  .
\end{multline*}

Consequently, by \eqref{gS} and \eqref{G},
if $|\gs|=k$ and $|\gt|=\ell$, then
{\multlinegap=0pt

\begin{multline}\label{gS2}
	\gS_{\gs,\gt}
	=
	\frac{1}{(k+\ell-1)!^2}
	\sum_{i=1}^k\sum_{j=1}^\ell
	\binom{i+j-2}{i-1} \binom{k+\ell-i-j}{k-i} 
	\binom{\gs(i)+\gt(j)-2}{\gs(i)-1} \binom{k+\ell-\gs(i)-\gt(j)}{k-\gs(i)} 
	\\
	-\frac1{(k-1)!\,k!\,(\ell-1)!\,\ell!}.
\end{multline}}%

\begin{proof}[Proof of Theorem \ref{T1}]

Let $U_1,\dots,U_n$ be independent and identically distributed (i.i.d.)\
random variables with a uniform distribution on $\oi$.
It is a standard trick  that (by symmetry) the reduction
$\red(U_1,\dots,U_n)$ is a 
uniformly random permutation in $\SG_n$ (note that $U_1,\dots,U_n$ almost
surely are distinct), so we can take this as our random $\pi$ and obtain the
representation, with $k=|\gs|$,
\begin{equation}
  \label{rep1}
	\xn\gs = X_\gs(\pi)=\sum_{i_1<\dots<i_k}\ett{\red(U_{i_1},\dots,U_{i_k})=\gs}.
\end{equation}

This is an example of an asymmetric $U$-statistic, and 
(a rather simple instance of)
the general theory in
\cite[Section 11.2]{SJIII} 
can be used to show the theorem.
However, the details are a bit technical, in particular  to calculate the
asymptotic covariances, so we will instead use another, more symmetric
representation. (See \cite[Remark 11.21]{SJIII}.)

Let $V_1,\dots,V_n$ be another sequence of i.i.d.\ random variables,
uniformly distributed on \oi{} and independent of $U_1,\dots,U_n$. 
Let $\pi'$ be the permutation that sorts these
numbers such that $V_{\pi'(1)}<\dots<V_{\pi'(n)}$
and let $\pi$ be the reduction of $U_{\pi'(1)},\dots,U_{\pi'(n)}$.
Then $\pi$ is still uniformly random, and it is easy to see that
\begin{equation}
  \label{rep2}
	\begin{split}
	\xn\gs = X_\gs(\pi)
	&:=
	\sum_{i_1<\dots<i_k}
	\ett{\red(U_{\pi'(i_1)},\dots,U_{\pi'(i_k)})=\gs}
	\\&\phantom:
	=
	\sumx_{j_1,\dots,j_k}
	\ett{\red(U_{j_1},\dots,U_{j_k})=\gs}\cdot\ett{V_{j_1}<\dots<V_{j_k}},
	\end{split}
\end{equation}
where $\sumx$ denotes summation over all distinct indices
 $j_1,\dots,j_k$.
This representation, while in some ways more complicated that \eqref{rep1},
has the great advantage that we sum over all ordered $n$-tuples of
distinct indices;
this is thus an example of a $U$-statistic, and we can apply the
basic central limit theorem by Hoeffding \cite[Theorem 7.1]{Hoeffding},
see also \cite{Rubin-Vitale} and \cite[Section 11.1]{SJIII}.
In order to compute the (co)variances, we follow the path of
Hoeffding's proof.

The main idea of Hoeffding's proof of his
central limit theorem is to use a projection.
In our case we let $W_j:=(U_j,V_j)\in\oi^2$ and write \eqref{rep2} as
\begin{equation}\label{rep3}
  \xn\gs = \sumx_{j_1,\dots,j_k} f_\gs(W_{j_1},\dots,W_{j_k}),
\end{equation}
for a certain (indicator) function $f_\gs$.
We then take the conditional expectation of $f_\gs(W_1,\dots,W_k)$ given one
of the variables $W_i$:
\begin{equation}\label{fgs}
  f_{\gs;i}(x,y) := \E \bigpar{f_\gs(W_1,\dots,W_k)\mid W_i=(x,y)};
\end{equation}
we also take the expectation
\begin{equation}
	\mu := \E f_\gs(W_1,\dots,W_k) = \E f_{\gs;i}(W_i).  
\end{equation}

Hoeffding then shows that if we replace $f_\gs$ by 
$f'_\gs(W_1,\dots,W_k):=\mu+\sum_{i=1}^k (f_{\gs,i}(W_i)-\mu)$, 
then the resulting error for the sum in \eqref{rep3} will have variance 
$O(n^{2k-2})$, which is negligible with the normalization used in 
\refT{T1}.
Thus we can approximate
$\xn\gs-\E\xn\gs$ by
\begin{equation}
  \sumx_{j_1,\dots,j_n} \sum_{i=1}^k \bigpar{f_{\gs;i}(W_{j_i})-\mu}
	= \sum_{i=1}^k  (n-1)\fall{k-1}\sum_{j=1}^n\bigpar{f_{\gs;i}(W_{j})-\mu}
	=(n-1)\fall{k-1}\sum_{j=1}^n F_\gs(W_j),
\end{equation}
where $(n-1)\fall{k-1}=(n-1)\dotsm(n-k+1)$ and
\begin{equation}\label{F}
	F_\gs(x,y):=\sum_{i=1}^k \bigpar{f_{\gs;i}(x,y)-\mu}.
\end{equation}

The asymptotic normality of $\xn\gs$ now
follows by the standard central limit theorem
for the i.i.d.\ random variables $F_\gs(W_j)$,
which yields $(\xn\gs-\E\xn\gs)/n^{k-1/2}\dto N\bigpar{0,\gS_{\gs,\gs}}$ 
where 
\begin{equation}
  \label{gSF}
	\gS_{\gs,\gs}:=\E \bigpar{F_{\gs}(W_1)^2} 
	= \intoi\intoi F_{\gs}(x,y)^2\dd x\dd y.
\end{equation}

Joint normality for several patterns $\gs$ (possibly of different lengths)
follows in the same way, with the asymptotic covariances
\begin{equation}
	\label{gSF2}
	\gS_{\gs,\gt}:=\E \bigpar{F_{\gs}(W_1)F_\gt(W_1)} 
	= \intoi\intoi F_{\gs}(x,y)F_\gt(x,y)\dd x\dd y.
\end{equation}

It remains to compute the functions $F_\gs$ defined in \eqref{F} 
In order to do this, we see that from \eqref{fgs} and the definition of
$f_\gs$ as an indicator function, cf.\ \eqref{rep2}--\eqref{rep3},
\begin{equation}\label{sw}
  f_{\gs;i}(x,y) =
	\PP\bigpar{\red(U_{1},\dots,U_{k})=\gs\mid U_i=x}
	\PP\bigpar{V_{1}<\dots<V_{k}\mid V_i=y}.
\end{equation}

For the second probability in \eqref{sw}
we require that $V_1,\dots,V_{i-1}<y$ and
$V_{i+1},\dots,V_k>y$, and furthermore that these two sets of variables are
increasing; since the variables are independent and uniformly distributed,
the probability is, recalling the notation \eqref{g}, 
\begin{equation}
  \frac{y^{i-1}}{(i-1)!} \frac{(1-y)^{k-i}}{(k-i)!} = \frac1{(k-1)!}g_{k,i}(y).
\end{equation}

Similarly, for the first probability in \eqref{sw} we require that the
$\gs(i)$:th smallest of $U_1,\dots,U_k$ is $x$, and that the others come in
the order specified by $\gs$, and the probability of this is
$(k-1)!^{-1}g_{k,\gs(i)}(x)$.
Consequently,
\begin{equation}\label{f1}
  f_{\gs;i}(x,y) = \frac1{(k-1)!^2}\,g_{k,\gs(i)}(x)g_{k,i}(y).
\end{equation}

Furthermore,
\begin{equation}\label{mu2}
	\mu := \E  f_{\gs}(W_1,\dots,W_k) =
	\PP\bigpar{\red(U_{1},\dots,U_{k})=\gs}
	\PP\bigpar{V_{1}<\dots<V_{k}}
	=\frac{1}{k!^2}.
\end{equation}

It follows from \eqref{F}, \eqref{f1}, \eqref{mu2} and \eqref{G} that
$F_\gs(x,y)=G_\gs(y,x)$. Hence \eqref{gSF}--\eqref{gSF2} agree with
\eqref{gS}, and Hoeffding's theorem yields \eqref{t1a}.

Hoeffding's theorem (and its proof sketched above) yields also the
convergence \eqref{t1b} of the covariances. To see that moment convergence
holds also for higher moments, let $m$ be a positive integer.
By \eqref{rep3},
\begin{equation}\label{mom}
  \E\bigpar{\xn\gs-\E\xn\gs}^m
	=\sumx_{j_{11},\dots,j_{k1}} \dotsm \sumx_{j_{1m},\dots,j_{km}}
	\E\prod_{i=1}^m \bigpar{f_\gs(W_{j_{1i}},\dots,W_{j_{ki}})-\mu}
\end{equation}

where the expectation on the right-hand side vanishes unless each index set 
\set{j_{1i},\dots,j_{mi}} contains at least one index shared by another such
set. In this case, however, there are at most $mk-m/2$ distinct indices, and
it follows that the moment \eqref{mom} is a polynomial in $n$ of degree 
at most $mk-m/2$.
In particular, the normalized central moment
$\E\bigpar{(\xn\gs-\E\xn\gs)/n^{k-1/2}}^m=O(1)$.
If $m$ is an even integer, this implies, by standard results on uniform
integrability, that all moments of lower order converge to the corresponding
moments of the limit $Z_\gs$, and the same holds for joint moments. Since $m$ is arbitrary, this shows convergence of all moments.

\end{proof}

\begin{example}\label{E1}

The case $k=1$ is trivial, with $\xn1=n$ deterministic. Indeed,
\eqref{g}--\eqref{G} yield $g_{1,1}(x)=1$ and $G_{1,1}(x,y)=0$.

\end{example}

\begin{example}\label{E2}

The simplest non-trivial example is $k=2$, where $X_{21}(\pi)$ is the 
\emph{number of inversions} in $\pi$. The distribution of this random
variable, for $\pi$ uniformly at random in $\SG_n$, is called the
\emph{Mahonian distribution}, and it is well-known that it is asymptotically
normal, see e.g.\ \cite[Section X.6]{FellerI}.
(See  \cite{CJZ-mahonian} for the case of permutations of multi-sets; it
would be interesting to obtain similar results for other patterns in 
multi-set permutations.)
A simple calculation using \eqref{g}--\eqref{G} yields
\begin{equation}
  G_{21,21}(x,y)=-2\bigpar{x-\tfrac12}\bigpar{y-\tfrac12}
\end{equation}
and \eqref{gS} or \eqref{gS2} yields $\gS_{21,21}=1/36$.
Hence  \refT{T1} in this case yields the well-known
\begin{equation}
  \frac{\xn{21}-\frac12\binom{n}2}{n^{3/2}}
\dto N\bigpar{0,1/36}.
\end{equation}

\end{example}

Example \ref{E1} is the only case when 
the limit $Z_\gs$ in  \refT{T1} vanishes, as we show next.

\begin{theorem}
  \label{T0}
	If $k>1$, then $\gS_{\gs,\gs}>0$ and thus $Z_\gs$ is non-degenerate, 
for every $\gs\in\SG_k$.

\end{theorem}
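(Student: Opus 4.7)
My plan is to use the identity $\gS_{\gs,\gs} = \|G_\gs\|_{L^2(\oi^2)}^2$ coming from \eqref{gS}, which reduces the problem to showing that $G_\gs$ is not the zero function in $L^2(\oi^2)$. Since $G_\gs$ is a polynomial, once I know it is nonzero somewhere it must be nonzero on a set of positive measure, and the result follows.

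The first step is to recognize the polynomials $g_{k,1}(x),\dots,g_{k,k}(x)$ from \eqref{g} as (up to normalization) the Bernstein basis of degree $k-1$. These are a well-known basis of the space of univariate polynomials of degree at most $k-1$, and in particular are linearly independent. Taking tensor products, the $k^2$ functions $g_{k,i}(x)\,g_{k,j}(y)$ with $1\le i,j\le k$ are then linearly independent in $L^2(\oi^2)$.

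The second step is to rewrite the constant $\tfrac1k$ in \eqref{G} in this tensor basis. The partition-of-unity identity for Bernstein polynomials, namely $\sum_{i=1}^k g_{k,i}(x) = (x+(1-x))^{k-1}=1$, gives $\sum_{i,j=1}^k g_{k,i}(x)g_{k,j}(y)=1$, so the $\tfrac1k$ subtracted inside \eqref{G} equals $\tfrac1k \sum_{i,j} g_{k,i}(x)g_{k,j}(y)$. Thus
\begin{equation*}
  (k-1)!^2\,G_\gs(x,y) = \sum_{i,j=1}^k \Bigl(\delta_{j,\gs(i)}-\tfrac1k\Bigr)\,g_{k,i}(x)\,g_{k,j}(y).
\end{equation*}

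By the linear independence established in the first step, $G_\gs\equiv 0$ would force the coefficient matrix $M_\gs:=(\delta_{j,\gs(i)}-\tfrac1k)_{i,j}$ to be identically zero, i.e.\ every entry of the permutation matrix $(\delta_{j,\gs(i)})_{i,j}$ would have to equal $\tfrac1k$. This is impossible for $k\ge 2$, since the entries of a permutation matrix are $0$ or $1$. Hence $G_\gs\not\equiv 0$, and therefore $\gS_{\gs,\gs}=\|G_\gs\|_{L^2(\oi^2)}^2>0$, so $Z_\gs$ is non-degenerate. There is no real obstacle here beyond verifying the linear-independence of the Bernstein tensor basis, which is standard; the argument is essentially an ``abstract nonsense'' application of a basis plus the trivial observation that a $0/1$ matrix of size $k\ge 2$ cannot be the constant matrix with entries $1/k$.
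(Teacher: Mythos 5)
Your proof is correct and follows essentially the same route as the paper's: both rewrite $G_\gs$ via the partition of unity $\sum_i g_{k,i}(x)=1$ as $\frac{1}{(k-1)!^2}\sum_{i,j}\bigl(\gd_{j,\gs(i)}-\tfrac1k\bigr)g_{k,i}(x)g_{k,j}(y)$, invoke linear independence of the $g_{k,i}$ (hence of their tensor products) to reduce to the coefficient matrix, and note that a permutation matrix cannot have all entries equal to $1/k$ when $k\ge2$. The only difference is that you spell out explicitly the Bernstein-basis identification and the final $0/1$-versus-$1/k$ step, which the paper leaves implicit.
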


\begin{proof}

 By \eqref{g},
\begin{equation}
  \sum_{i=1}^k g_{k,i}(x) = 1.
\end{equation}

  Hence \eqref{G} may be written, using Kronecker's delta  $\gd_{i,j}$,
\begin{equation}\label{G2}
  G_\gs(x,y) := \frac{1}{(k-1)!^2} 
	\sum_{i=1}^k \sum_{j=1}^k
	\Bigpar{\gd_{j,\gs(i)}-\frac1k}g_{k,i}(x) g_{k,j}(y).
\end{equation}

For a fixed $k$, the polynomials $g_{k,i}$, $1\le i\le k$, are linearly
independent (and form basis in the $k$-dimensional vector space of polynomials
of degree $\le k-1$).
Hence the $k^2$ tensor products $g_{k,i}(x)g_{k,j}(y)$ are
linearly independent in $L^2(\oi^2)$, and it follows from \eqref{G2} and
\eqref{gS} that if
$k\ge2$, then $G_\gs$ is not identically 0 and thus 
$\gS_{\gs,\gs}=\iint G_\gs (x,y)^2>0$.
\end{proof}

For a given $k$ we have $k!$ patterns $\gs\in\SG_k$ and thus $k!$ limit
variables $Z_\gs$. We have just seen that (if $k>1$)
these are all non-degenerate; however, they are not linearly independent.
For example, the sum $\sum_{\gs\in\SG_k}X_\gs(\pi)=\binom nk$ for every $\pi$,
so the sum is deterministic and it follows that $\sum_{\gs\in \SG_k} Z_\gs = 0$.
Many non-trivial linear combinations vanish too, as is seen by the following
theorem.

\begin{theorem}\label{T2}
  Let $k\ge1$. The $k!$ limit random variables $Z_\gs$, $\gs\in\SG_k$, span
  a linear space of dimension $(k-1)^2$.
\end{theorem}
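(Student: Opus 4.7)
The plan is to translate the question about the linear span of the Gaussian variables $Z_\sigma$ into a linear-algebra question about $k\times k$ matrices, and then identify that span with a familiar subspace of codimension $2k-1$.

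First, since $(Z_\sigma)_{\sigma\in\SG_k}$ is a jointly centered Gaussian family with covariance matrix $\Sigma=(\Sigma_{\sigma,\tau})$, a linear combination $\sum_\sigma c_\sigma Z_\sigma$ vanishes almost surely iff its variance does, i.e.\ iff $\sum_{\sigma,\tau}c_\sigma c_\tau \Sigma_{\sigma,\tau}=\|\sum_\sigma c_\sigma G_\sigma\|_{L^2(\oi^2)}^2=0$. Hence the linear span of $\{Z_\sigma\}$ has the same dimension as the linear span of $\{G_\sigma\}$ in $L^2(\oi^2)$. So it suffices to compute $\dim\operatorname{span}\{G_\sigma:\sigma\in\SG_k\}$.

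Next, I would use the tensor representation \eqref{G2}: writing $M_\sigma$ for the $k\times k$ matrix with entries $(M_\sigma)_{ij}=\delta_{j,\sigma(i)}-\tfrac1k$, we have
\begin{equation*}
G_\sigma(x,y)=\frac{1}{(k-1)!^2}\sum_{i,j}(M_\sigma)_{ij}\,g_{k,i}(x)g_{k,j}(y).
\end{equation*}
Since $\{g_{k,i}\}_{i=1}^k$ is a basis of the space of polynomials of degree $\le k-1$ (as already observed in the proof of Theorem \ref{T0}), the $k^2$ products $g_{k,i}(x)g_{k,j}(y)$ are linearly independent in $L^2(\oi^2)$. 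Therefore $\dim\operatorname{span}\{G_\sigma\}=\dim\operatorname{span}\{M_\sigma:\sigma\in\SG_k\}$ in the space $\mathbb{R}^{k\times k}$. Note that $M_\sigma=P_\sigma-\tfrac1k J$, where $P_\sigma$ is the permutation matrix of $\sigma$ and $J$ is the all-ones matrix.

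It remains to show that $\operatorname{span}\{M_\sigma\}$ has dimension exactly $(k-1)^2$. Let $V_0\subset\mathbb{R}^{k\times k}$ be the subspace of matrices whose row sums and column sums all vanish; a direct count ($k^2$ entries, $k$ row-sum constraints and $k$ column-sum constraints with one linear dependency) gives $\dim V_0=(k-1)^2$. Every $M_\sigma$ lies in $V_0$ because each row and each column of $P_\sigma-\tfrac1k J$ sums to $1-1=0$, so $\operatorname{span}\{M_\sigma\}\subseteq V_0$. For the reverse inclusion I would invoke the standard fact that the permutation matrices $P_\sigma$ span the $((k-1)^2+1)$-dimensional subspace $V\subset\mathbb{R}^{k\times k}$ of matrices whose row sums are all equal to a common value that also equals every column sum (this is the linear hull of Birkhoff's polytope of doubly stochastic matrices, and can be proved directly by exhibiting $(k-1)^2+1$ linearly independent permutation matrices, or by counting independent constraints as above). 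The linear map $L:V\to V_0$ defined by $L(A)=A-\tfrac{r(A)}{k}J$, where $r(A)$ is the common row sum, satisfies $L(P_\sigma)=M_\sigma$ and has kernel exactly $\mathbb{R}J$ (of dimension $1$). Hence $\operatorname{span}\{M_\sigma\}=L(V)$ has dimension $\dim V-1=(k-1)^2$, and by the inclusion above this span equals $V_0$.

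The only step that is not purely formal is the identification $\dim\operatorname{span}\{P_\sigma\}=(k-1)^2+1$; I expect this to be the main technical obstacle, though it is classical and can be handled either by citing the linear hull of the Birkhoff polytope or by exhibiting an explicit basis (e.g.\ the identity together with the transpositions $(1\,i)(j\,k)$ suitably chosen).
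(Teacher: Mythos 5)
Your proposal is correct and follows essentially the same route as the paper: reduce the span of the Gaussian variables $Z_\sigma$ to the span of the functions $G_\sigma$, then via linear independence of the products $g_{k,i}(x)g_{k,j}(y)$ to the span of the matrices $P_\sigma-\tfrac1k J$, and identify that span with the $(k-1)^2$-dimensional space of matrices with vanishing row and column sums using Birkhoff's theorem. The only cosmetic difference is in the final inclusion: the paper scales an arbitrary zero-sum matrix into the Birkhoff polytope and applies the convex-combination fact directly, whereas you pass through the $\bigl((k-1)^2+1\bigr)$-dimensional linear hull of the permutation matrices and quotient out $\mathbb{R}J$; both hinge on the same classical fact.
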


\begin{proof}

By the definition \eqref{gS}, this linear space, $V$ say, is isomorphic 
(and isometric for the appropriate $L^2$-norms)
to the linear space $V_1$ spanned by the functions $G_\gs$ on $\oi^2$.
Furthermore, by \eqref{G2} and the comments after it, $V_1$ is
isomorphic to the linear space $V_2$ of $k\times k$ matrices spanned by the
matrices 
$A_{\gs} := \bigpar{\gd_{j,\gs(i)}-\frac1k}_{ij=1}^k$.
Let $V_3$ be the space of all $k\times k$ matrices
with all row sums and column sums 0.
Then each matrix $A_{\gs}\in V_3$ and thus $V_2\subseteq V_3$.
Conversely, it is easily seen that each matrix in $V_3$ is a linear
combination of matrices $A_\gs$, for example using the well-known fact that
every doubly stochastic matrix is a convex combination of permutation
matrices.
Hence $V_2=V_3$. Finally, $\dim(V_3)=(k-1)^2$ since a matrix in $V_3$ is
uniquely determined by its upper left corner $(k-1)\times(k-1)$ submatrix
obtained by deleting the last row and column, and conversely this submatrix
may be chosen arbitrarily.
\end{proof}

\begin{example}\label{E3}

There are 6 patterns of length  $k=3$.
Taking them in lexicographic order 123, 132, 213, 231, 312, 321,
and using Maple to calculate the covariance matrix of the 
limit variables $Z_{\gs}$ by \eqref{g}--\eqref{gS}, we find
\begin{equation}\label{cov3}
	\bigpar{\Cov(Z_\gs,Z_\gt)}_{\gs,\gt\in\SG_3}
	=\bigpar{\gS_{\gs,\gt}}_{\gs,\gt\in\SG_3}
	=
	\frac{1}{5!^2}
	\begin {pmatrix} 

26&12&12&-13&-13&-24\\ 

12&14&-1&-6&-6&-13\\ 

12&-1&14&-6&-6&-13\\ 

-13&-6&-6&14&-1&12\\

-13&-6&-6&-1&14&12\\ 

-24&-13&-13&12&12&26

\end{pmatrix}
.
\end{equation}

We note that the asymptotic variances differ between different patterns;
they are  $13/7200$ (for 123 and 321) or $7/7200$ (for the other patterns).

The eigenvalues of the covariance matrix \eqref{cov3} are
\begin{equation}\label{ev3}
	\frac{3}{5!^2}
	\bigpar{25,5,5,1,0,0},
\end{equation}
verifying that this matrix has rank 4 as given by
\refT{T2}. A choice of pairwise orthogonal eigenvectors (in the
corresponding order) is 













\begin{equation}\label{evv3}
	\newcommand\x{\phantom{-}}
  \begin{pmatrix}
		\x2\\ \x1\\ \x1\\ -1\\ -1\\ -2
  \end{pmatrix}
	,\quad
  \begin{pmatrix}
		\x0\\ \x1\\ -1\\ \x0\\ \x0\\ \x0
  \end{pmatrix}
	,\quad
  \begin{pmatrix}
		\x0\\ \x0\\ \x0\\ \x1\\ -1\\ \x0
  \end{pmatrix}
	,\quad
  \begin{pmatrix}
		\x2\\ -1\\ -1\\ -1\\ -1\\ \x2
  \end{pmatrix}
	,\quad
  \begin{pmatrix}
		\x1\\ -1\\ -1\\ \x1\\ \x1\\ -1
  \end{pmatrix}
	,\quad
  \begin{pmatrix}
		1\\1\\1\\1\\1\\1
  \end{pmatrix}
	.
\end{equation}

\end{example}

\begin{remark}

The last eigenvector in \eqref{evv3} corresponds to the trivial fact
mentioned above that the sum of all $Z_\gs$ vanishes.
The fifth eigenvector, also with eigenvalue 0, says that 
\begin{equation}\label{Z=0}
	Z_{123}+Z_{231}+Z_{312}-Z_{132}-Z_{213}-Z_{321}  =0.
\end{equation}

Let $Y(\pi)$ be the corresponding number
\begin{equation}\label{Y}
	Y(\pi):=
	X_{123}(\pi)+X_{231}(\pi)+X_{312}(\pi)-X_{132}(\pi)-X_{213}(\pi)-X_{321}(\pi),
\end{equation}
and let $Y_n:=Y(\pi)$ with $\pi$ chosen uniformly at random in $\SG_n$.
(Note that $Y(\pi)$ is the sum of the signs of the $\binom n3$ permutations
$\red\bigpar{\pi_{i_1}\pi_{i_2}\pi_{i_3}}$.)
\refT{T1} and \eqref{Z=0} thus say that, as $n\to\infty$,
$n^{-5/2}Y_n\dto 0$. However, in this case, the random variable $Y_n$ does
not vanish identically. (Take $\pi$ as the identity permutation.)
Using the same methods as in Section~\ref{SECmomsSn}, we can show that
\begin{equation}
	\Var(Y_n) = \frac{n^{2} (n-1) (n-2)}{18}.
\end{equation}

In particular, we have that the leading term of $\Var(Y_n)$ is $\frac1{18}n^4$, 
i.e. of order $n^{2k-2}$ instead of
$n^{2k-1}$ as in the cases when \refT{T1} yields a non-degenerate limit.
In such cases, one can use a more advanced version of Hoeffding's argument
above and show that there is an asymptotic distribution that can be
represented as an (infinite) polynomial of degree 2 in normal random
variables; this polynomial can further be diagonalized as a linear
combination of squares of independent normal variables, see e.g.{}
\cite{Rubin-Vitale} and \cite[Section 11.1]{SJIII}. 
In the present case this leads to
\begin{equation}\label{Y*}
  n^{-2} Y_n \dto  Y^* = \sum_{\substack{\ell,m=-\infty\\\ell,m\neq0}}^\infty
\frac1{2\pi^2 \ell m}\bigpar{\xi_{\ell,m}^2-1},
\end{equation}
where $\xi_{\ell,m}$ are i.i.d.\ standard normal random variables.
(We omit the details but note that the bilinear form in 
\cite[Corollary 11.5(iii)]{SJIII} 
in this case after some calculation
turns out to correspond to
the convolution operator on $L^2(\mathbb T^2)$ given by convolution with
$H(x,y)=\frac16(2x-1)(2y-1)$ (where we identify the group $\mathbb T$ with
$[0,1)$);  
 hence its eigenvalues are the Fourier coefficients 
$\widehat H(\ell,m) = -1/(6\pi^2\ell m)$, which yields the coefficients in
 \eqref{Y*}.)
Note that, since $\Var(\xi_{\ell,m}^2)=2$, 
\begin{equation}\label{Y*v}
	\Var Y^* = \sum_{\substack{\ell,m=-\infty\\\ell,m\neq0}}^\infty
	\frac2{4\pi^4 \ell^2 m^2} =\frac1{18},
\end{equation}
 in accordance with the asymptotic formula $\Var(Y_n)\sim n^4/18$.
Furthermore, the representation \eqref{Y*} of the limit $Y$ yields its
moment generating function as
\begin{equation}
	\E e^{tY^*} 
	= \prod_{\substack{\ell,m=-\infty\\\ell,m\neq0}}^\infty
	\Bigpar{1-\frac{2t}{2\pi^2 \ell m}}^{-1/2}
	= \prod_{\ell,m=1}^\infty
	\Bigpar{1-\frac{t^2}{\pi^4 \ell^2 m^2}}^{-1}
	= \prod_{m=1}^\infty \frac{t/m\pi}{\sin(t/m\pi)},
	\qquad |\Re t|<\pi^2.
\end{equation}

This type of limit is typical of the degenerate cases that can occur for
certain linear combinations of pattern counts. 
It is also possible to obtain higher degeneracies in special cases, with
variance of still lower order and a limit that is a polynomial of higher
degree in infinitely many normal variables; one example is to generalize 
\eqref{Y} by taking, for any fixed $k\ge3$, the sum of the signs of the
$\binom nk$ patterns of length $k$ occurring in $\pi$. It can be seen that
for this example, $\Var(Y_n)$ is a polynomial in $n$ of degree $k+1$ only
(instead of the typical $2k-1$),
because  all higher order terms cancel in this highly symmetric example.
\end{remark}

\begin{example}

There are 24 patterns of length  $k=4$.  
A calculation as in Example \ref{E3} of the covariance matrix yields a
$24\times24$ matrix 
of rank $(4-1)^2=9$. The 9 non-zero eigenvalues are
\begin{equation}\label{ev4}
	\frac{8}{7!^2}
	\bigpar{441,147,147,49,21,21,7,7,1}.
\end{equation}

Similarly, for $k=5$ the covariance matrix is a $120\times120$ matrix with 
the $4^2=16$ non-zero eigenvalues
\begin{equation}\label{ev5}
	\frac{30}{9!^2}
	\bigpar{7056,3024,3024,1296,756,756,324,324,84,84,81,36,36,9,9,1}.
\end{equation}

The fact that the eigenvalues in \eqref{ev3}, \eqref{ev4} and \eqref{ev5}
all are simple rational numbers suggests that there is a general structure
(valid for all $k$) 
for these eigenvalues, and presumably also for the corresponding
eigenvectors;
it would be interesting to know more about this.

\end{example}


\section{Conclusion}\label{SECconcl}

In this article, we studied the moments and mixed moments of the random variables $X_{\sigma}(\pi)$ for a number of patterns $\sigma$, where $\pi$ may be chosen from $\SG_{n}$ or a pattern avoiding set $\SG_{n}(\tau)$. In addition, we prove that for any two patterns, the corresponding random variables are joint asymptotically normal when the permutations are drawn from $\SG_{n}$. The contrasting computational approach can compute a number of moments and mixed moments as well as derive (rigorous) formulas for the lower moments. We anticipate that this approach could be extended to provide an alternative proof to the joint asymptotic normality of multiple random variables, but we leave this as ``future work''.

In the setting where the permutations are chosen from the pattern avoiding set $\SG_{n}(\tau)$ (for some fixed pattern $\tau$), much less is known. Others have recently studied the total number of occurrences of a pattern in these sets, which is equivalent to the expected value (i.e., the first moment) of the random variable $X_{\sigma}$, generally for when both $\sigma, \tau \in \SG_{3}$. Our approach allows us to quickly compute many empirical moments, far beyond the first moment. We expect that a more thorough analysis of these higher moments will uncover interesting properties and that in some cases, these higher moments will also have closed form formulas. In addition, the random variables for some patterns appear to \emph{not} be asymptotically normal (whereas in the case where permutations are drawn from $\SG_{n}$, they are asymptotically normal for every pattern \cite{Bona}). It would be interesting to understand which patterns (if any) have corresponding random variables that are asymptotically normal when permutations are drawn from $\SG_{n}(\tau)$.


\newcommand\arxiv[1]{\texttt{arXiv:#1.}}

\newcommand\arXiv{\arxiv}

\def\nobibitem#1\par{}

\end{document}